\newtheorem{remark}{Remark}[section] 
\newtheorem{example}{Example}[section] 
\title{A sharp error estimate of piecewise polynomial collocation  for nonlocal problems with weakly singular kernels
\thanks{This work was supported by NSFC 11601206 and the Fundamental Research Funds for the Central Universities under Grant No. lzujbky-2019-80. }}
\author{Minghua Chen\thanks{Corresponding author. School of Mathematics and Statistics, Gansu Key Laboratory of Applied Mathematics and Complex Systems,
 Lanzhou University, Lanzhou 730000, P.R. China  (Email: chenmh@lzu.edu.cn)}
\and Wenya Qi \thanks{
School of Mathematics and Statistics, Gansu Key Laboratory of Applied Mathematics and Complex Systems,
 Lanzhou University, Lanzhou 730000, P.R. China  (Email: qiwy16@lzu.edu.cn) }
 \and Jiankang Shi \thanks{
School of Mathematics and Statistics, Gansu Key Laboratory of Applied Mathematics and Complex Systems,
 Lanzhou University, Lanzhou 730000, P.R. China  (Email: shijk17@lzu.edu.cn)}
  \and Jiming Wu \thanks{
Institute of Applied Physics and Computational Mathematics, P.O. Box 8009,
 Beijing 100088, P.R. China  (Email: wu\underline{ }jiming@iapcm.ac.cn)}
 }
\begin{document}

\maketitle

\begin{abstract}
As is well known, using piecewise linear polynomial collocation (PLC) and piecewise quadratic  polynomial collocation (PQC), respectively,  to approximate
the weakly singular   integral
$$I(a,b,x) =\int^b_a \frac{u(y)}{|x-y|^\gamma}dy,  \quad x \in (a,b) ,\quad 0< \gamma <1,$$
have the local  truncation error $\mathcal{O}\left(h^2\right)$ and $\mathcal{O}\left(h^{4-\gamma}\right)$.
Moreover,  for Fredholm weakly singular integral equations of the second kind, i.e.,
$\lambda u(x)- I(a,b,x) =f(x)$ with $ \lambda \neq 0$,
also have    global convergence rate  $\mathcal{O}\left(h^2\right)$ and $\mathcal{O}\left(h^{4-\gamma}\right)$ in  [Atkinson and  Han,  Theoretical Numerical Analysis, Springer, 2009].

Formally, following  nonlocal  models   can be viewed  as  Fredholm weakly singular integral equations
$$\int^b_a \frac{u(x)-u(y)}{|x-y|^\gamma}dy  =f(x),   \quad x \in (a,b) ,\quad 0< \gamma <1.$$
However, there are still some significant  differences for the models in these two fields.
In the first part of this paper we prove that the weakly singular  integral by PQC have
an optimal  local truncation error  $\mathcal{O}\left(h^4\eta_i^{-\gamma}\right)$,
where $\eta_i=\min\left\{x_i-a,b-x_i\right\}$ and $x_i$ coincides with an element junction point.
Then a sharp   global convergence estimate  with $\mathcal{O}\left(h\right)$ and  $\mathcal{O}\left(h^3\right)$  by PLC and PQC,  respectively,   are established for nonlocal problems.
Finally, the numerical experiments including two-dimensional case are given to illustrate the effectiveness of the presented method.
\end{abstract}

\begin{keywords}
Nonlocal  problems, weakly singular kernels, piecewise polynomial collocation,  convergence analysis
\end{keywords}

\begin{AMS}
45F15, 65L60, 65M12
\end{AMS}

\pagestyle{myheadings}
\thispagestyle{plain}
\markboth{M. H. CHEN, W. Y. QI, J. K. SHI,  AND J. M. WU}{SHARP ERROR ESTIMATE  FOR NONLOCAL PROBLEMS }
\section{Introduction}

In this paper we study an error estimate of the   piecewise linear  polynomial  collocation (PLC) and  piecewise quadratic  polynomial  collocation (PQC)
for the  nonlocal problems  with a  weakly singular kernels,  whose prototype equation is   \cite{Andreu:10,Bates:06,Silling:00,TWW:13}
\begin{equation}\label{n1.1}
 \int^b_a \frac{u(x)-u(y)}{|x-y|^\gamma}dy  =f(x),   \quad x \in (a,b) ,\quad 0< \gamma <1
\end{equation}
with Dirichlet  boundary conditions  $u(a)=u_a$ and $u(b)=u_b$.
Such as nonlocal problems \eqref{n1.1} have been used to model very different scientific phenomena occurring in various applied fields, for example in materials science, biology, particle systems, image processing, coagulation models, mathematical finance, etc. \cite{Andreu:10,Bates:06}.

Formally, the nonlocal  models  \eqref{n1.1} can be viewed  as  Fredholm weakly singular integral equations of the second kind \cite{Aikinson:67,Aikinson:09,ZhangGJ:16}, i.e.,
\begin{equation*}\tag{$*$}
\lambda u(x)- \int^b_a \frac{u(y)}{|x-y|^\gamma}dy  =f(x),    \quad x \in (a,b) ,\quad 0< \gamma <1
\end{equation*}
with a nonzero complex number $\lambda \in \mathbb{C}$.
However, there are still some significant  differences for the models in these two fields.
For example, the inverse operators of  Fredholm  integral equations ($*$)  are uniformly bounded, see Theorem 12.5.1 of  \cite{Aikinson:09} or \cite{Aikinson:67}; but nonlocal model  \eqref{n1.1} is unbounded.
From perspective of error analysis,  it is shown that  the Fredholm integral equations  ($*$) have $\mathcal{O}\left(h^2\right)$ convergence  \cite[p.\,522]{Aikinson:09} by PLC
and $\mathcal{O}\left(h^{4-\gamma}\right)$ convergence  \cite[p.\,525]{Aikinson:09} by PQC.
Such a situation does not take place for  model \eqref{n1.1}, even for  the case $\gamma=0$. Later in the section 4,
 we  prove  an optimal global convergence estimate with $\mathcal{O}\left(h\right)$  by PLC and $\mathcal{O}\left(h^3\right)$ by PQC, respectively, for  model \eqref{n1.1}.
In fact, the convergence rate  for model \eqref{n1.1} with PLC remains to be proved in \cite{TWW:13}.

The first key step of error analysis for  models  \eqref{n1.1} is to study the  following   integral with the weakly singular kernels, being defined as
\begin{equation}\label{n1.2}
 I(a,b,x) =\int^b_a \frac{u(y)}{|x-y|^\gamma}dy,  \quad x \in (a,b) ,\quad 0< \gamma <1.
\end{equation}
It should be noted that the integral \eqref{n1.2}  can be  decomposed  into  Abel-Liouville integrals (often also called Riemann-Liouville fractional integrals) \cite{Lubich:86} and Weyl fractional integral \cite{Miller:93} if they depart  from the constant coefficient $1/\Gamma(1-\gamma)$.

Among various techniques for solving integral equations, collocation methods are among the simplest \cite{ZhangGJ:16}, which is only needed one-fold of integration  and is much simpler to implement on a computer.
Piecewise polynomial collocation methods for the integral \eqref{n1.2} have been extensively examined by many authors.
As is well known, for weakly singular ($0< \gamma< 1$)  integral \eqref{n1.2},   an optimal  error estimate with  $\mathcal{O}\left(h^2\right)$
was proved by PLC and only $\mathcal{O}\left(h^3\right)$ convergence was established by PQC in \cite{Aikinson:67}.
Up to now, the quasi-optimal  error estimate with  $\mathcal{O}\left(h^{4-\gamma}\right)$ convergence was provided by PQC, see \cite{Hoog:73} or \cite[p.\,525]{Aikinson:09}.
A few years later, the error estimate of the Newton-Cotes rules (piecewise polynomial collocation) for hypersingular ($\gamma\geq 1$) integrals \eqref{n1.2} was first studied in \cite{Linz:85}.
Later, the superconvergence estimate of the Hadamard finite-part (hypersingular) integral is discussed in \cite{WL:05,WS:08} and a class of collocation-type methods are developed in \cite{LiSun:10}.
Recently,  fractional hypersingular integral equations and nonlocal diffusion equations with PLC is studied  in \cite{ZhangGJ:16} and a general Newton-Cotes rules for fractional hypersingular integrals have been developed in \cite{GFTJZ:18}.
It should be noted that there are still some differences for the hypersingular integral and weakly singular integral equations. For example,  the stiffness matrix of hypersingular integral is a strictly diagonally dominant M-matrix \cite{ZhangGJ:16}, however, it is not possessed for the  weakly singular integral equations by PLC.

Numerical methods for the nonlocal problems \eqref{n1.1} have been proposed by various authors.
There are already the second-order convergence results for model \eqref{n1.1}  by linear  FEM  \cite{CES:19,WT:12}
and for  peridynamic or nonlocal problems with the horizon parameter by PLC \cite{CD:17,Tian:13,ZhangGJ:16}.
As with our previous reviews, it seems to be second-order convergence for nonlocal  model \eqref{n1.1} as well as Fredholm weakly singular integral equations ($*$) by PLC.
Unfortunately, the numerical result of \eqref{n1.1} with $\gamma=1$  shows that the convergence rate seems to be close to 1.5 by PLC \cite{TWW:13} although it remains to be proved.
In this work, inspired by these observations, we will provide  the rigorous convergence error estimate with $\mathcal{O}\left(h\right)$ by PLC for the nonlocal model (\ref{n1.1}),  even for  the case $\gamma=0$.
How about PQC? We have known that there exists the quasi-optimal  error estimate with  $\mathcal{O}\left(h^{4-\gamma}\right)$ convergence for \eqref{n1.2} by PQC in  \cite[p.\,525]{Aikinson:09} or \cite{Hoog:73}.
However, it is still not an optimal  error estimate  when the singular point coincides with an element junction point.
Developed  the techniques of  hypersingular integral \cite{GFTJZ:18,LiSun:10,WL:05}, we will provide  an optimal error  $\mathcal{O}\left(h^4\eta_i^{-\gamma}\right)$, $\eta_i=\min\left\{x_i-a,b-x_i\right\}$
for the integral \eqref{n1.2} with weakly singular kernels by PQC. Then the main purpose of the paper is the derivation of an optimal global convergence estimate with $\mathcal{O}\left(h^3\right)$ for nonlocal problems \eqref{n1.1} by PQC.

The paper is organized as follows. In the next section, we provide the discretization schemes for the integral \eqref{n1.2} and nonlocal model \eqref{n1.1}, respectively.
In Section 3, we study the local truncation error for integral \eqref{n1.2} by PLC and PQC. The  global convergence rate  for nonlocal model  \eqref{n1.1} by PLC and PQC, respectively, are detailed proved in Section 4. To show the effectiveness of the presented schemes, results of numerical experiments are reported in Section 5.
In particularity, some simulations for two-dimensional nonlocal problems with  nonsmooth kernels in  nonconvex   polygonal  domain are performed.
Finally, we conclude the paper with some remarks on the presented results.

\section{Collocation method and numerical schemes}\label{sec:1}
To elucidate the superconvergence phenomenon, we use  the piecewise linear and quadratic  polynomial collocation method  to approach the nonlocal model  \eqref{n1.1}.
Let us first  consider  the weakly singular integral  \eqref{n1.2}.
\subsection{Collocation method for integral \eqref{n1.2}}\label{subsection2.1}
In \cite{Aikinson:67} the author already provided  integral formulas to compute  the weakly singular integral \eqref{n1.2}
by the piecewise polynomial collocation.
Here, for the sake of theorems,  we   should explicitly express the coefficients of the  quadrature schemes  by integral formulas.

{\bf Case I: PLC for integral \eqref{n1.2}.}
Let  $a=x_{0}<x_{1}<x_2 \cdots <x_{N-1}<x_{N}=b$ be  a partition  with the uniform  mesh step $h=(b-a)/N$.
Let the piecewise linear basis  function $\phi_{j}(x)$ be defined by  \cite[p.\,484]{Aikinson:09}.
Then  the piecewise linear  interpolation $I_1(a,b,x)$ of ({\ref{n1.2}}) is
\begin{equation*}
\begin{split}
&I_1(a,b,x_{i})
=  \int^{b}_{a} \frac{ \sum^{N}_{j=0} u(x_j) \phi_{j}(y)}{|x_{i}-y|^{\gamma}} dy
= \sum^{N-1}_{j=0} \int^{x_{j+1}}_{x_{j}} \frac{ u(x_{j+1})\phi_{j+1}(y) +u(x_j)\phi_{j}(y) }{|x_{i}-y|^{\gamma}} dy \\
&=  \sum^{N-1}_{j=1}\! u(x_j)\! \int^{x_{j+1}}_{x_{j-1}} \!\!\! \frac{ \phi_{j}(y)}{|x_{i}-y|^{\gamma}} dy + u(x_0)\! \int^{x_{1}}_{x_{0}}
     \frac{ \phi_{0}(y)}{|x_{i}-y|^{\gamma}} dy
     + u(x_N) \int^{x_{N}}_{x_{N-1}} \frac{ \phi_{N}(y)}{|x_{i}-y|^{\gamma}} dy,
\end{split}
\end{equation*}
i.e.,
\begin{equation}\label{n2.1}
\begin{split}
I_1(a,b,x_{i})=\sigma_{h,\gamma}\left[\sum^{N-1}_{j=1}g_{|i-j|}u(x_j)  +\alpha_iu(x_0)+\alpha_{N-i}u(x_N) \right]
\end{split}
\end{equation}
with $\sigma_{h,\gamma}=\frac{h^{1-\gamma}}{(2-\gamma)(1-\gamma)}$.
Using  integral formulas of \cite{Aikinson:67}, we can   explicitly derive the  internal values  coefficients
$g_0=2$, $g_{k}=(k+1)^{2-\gamma}-2k^{2-\gamma}+(k-1)^{2-\gamma}, k\geq 1;$
and the boundary values  coefficients $\alpha_i=(i-1)^{2-\gamma}-i^{2-\gamma}+(2-\gamma)i^{1-\gamma}$, $i=1,2,\ldots  N-1.$


{\bf Case II: PQC for integral \eqref{n1.2}.}
Let  $a=x_{0}<x_{\frac{1}{2}}<x_{1}< \cdots <x_{\frac{2N-1}{2}}<x_{N}=b$ be  a  partition with the uniform mesh step $h=(b-a)/N$.
Let the piecewise quadratic basis function $\varphi_{j}(y)$ or $\varphi_{j+\frac{1}{2}}(y)$ be given in  \cite[p.\,499]{Aikinson:09}.
Let $u_Q(y)$ be the piecewise Lagrange quadratic interpolant of $u(y)$, i.e.,
\begin{equation}\label{n2.2}
  u_Q(y)=\sum^{N}_{j=0} u(x_j) \varphi_{j}(y)+\sum^{N-1}_{j=0} u(x_{j+\frac{1}{2}}) \varphi_{j+\frac{1}{2}}(y).
\end{equation}
Then we have  the following  piecewise quadratic  interpolation $I_2(a,b,x)$ of   ({\ref{n1.2}})
\begin{equation}\label{n2.3}
\begin{split}
I_2\left(a,b,x_{\frac{i}{2}}\right)
= & \int^{b}_{a} \frac{ u_Q(y)}{|x_{\frac{i}{2}}-y|^{\gamma}} dy\\
= & \sum^{N-1}_{j=1}\! u(x_j)\! \int^{x_{j+1}}_{x_{j-1}} \!\!\! \frac{ \varphi_{j}(y)}{|x_{\frac{i}{2}}-y|^{\gamma}} dy + u(x_0)\! \int^{x_{1}}_{x_{0}}
     \frac{ \varphi_{0}(y)}{|x_{\frac{i}{2}}-y|^{\gamma}} dy \\
     &+ u(x_N) \int^{x_{N}}_{x_{N-1}} \frac{ \varphi_{N}(y)}{|x_{\frac{i}{2}}-y|^{\gamma}} dy
     + \sum^{N-1}_{j=0}u(x_{j+\frac{1}{2}}) \int^{x_{j+1}}_{x_{j}} \frac{\varphi_{j+\frac{1}{2}}(y) }{|x_{\frac{i}{2}}-y|^{\gamma}} dy
\end{split}
\end{equation}
with   $1\leq i\leq 2N-1$.
We divide    \eqref{n2.3}  into two parts as follows
\begin{equation}\label{n2.4}
\begin{split}
I_2(a,b,x_i)
=&\eta_{h,\gamma}\left[\sum^{N-1}_{j=1}m_{|i-j|}u(x_j)  +   \sum^{N-1}_{j=0}q_{|i-j-\frac{1}{2}|-\frac{1}{2}}u(x_{j+\frac{1}{2}})   \right.\\
&\qquad\quad \left.+\beta_iu(x_0)+\beta_{N-i}u(x_N)\right],~i=1,2,\cdots,N-1;
\end{split}
\end{equation}
and
\begin{equation}\label{n2.5}
\begin{split}
I_2(a,b,x_{i+\frac{1}{2}})
=&\eta_{h,\gamma}\left[\sum^{N-1}_{j=1}p_{|i+\frac{1}{2}-j|-\frac{1}{2}}u(x_j)+\sum^{N-1}_{j=0}n_{|i-j|}u(x_{j+\frac{1}{2}})\right.\\
& \qquad \quad\left. +\gamma_iu(x_0)+\gamma_{N-i-1}u(x_N)  \right],~~i=0,1,\cdots,N-1
\end{split}
\end{equation}
with $\eta_{h,\gamma}=\frac{h^{1-\gamma}}{(3-\gamma)(2-\gamma)(1-\gamma)}$.

Here, from   integral formulas of \cite{Aikinson:67}, we can   explicitly compute $m_0=2(1+\gamma)$ and
$$m_k=4\left[ (k+1)^{3-\gamma}\!-\!(k-1)^{3-\gamma} \right] \!- \!(3-\gamma)\left[ (k+1)^{2-\gamma}+6k^{2-\gamma}+(k-1)^{2-\gamma}  \right],~ k\geq 1;$$
and $p_0=4\left[\left(\frac{3}{2}\right)^{3-\gamma}-\left(\frac{1}{2}\right)^{3-\gamma}\right]-(3-\gamma)\left[\left(\frac{3}{2}\right)^{2-\gamma}
+3\left(\frac{1}{2}\right)^{2-\gamma}\right]$,  $p_{k}=m_{k+\frac{1}{2}} $, $k\geq 1$.
Moreover, $q_{k}=-8\left((k+1)^{3-\gamma}-k^{3-\gamma}\right)+4(3-\gamma)\left((k+1)^{2-\gamma}+k^{2-\gamma}\right)$, $k \ge 0$;
and $n_{0}= (2-\gamma) 2^{\gamma+1}$, $n_{k}=q_{k-\frac{1}{2}} $, $k\geq 1.$
The boundary values coefficients
$$\beta_i
\!= 4\left[ i^{3-\gamma} \!- \!(i-1)^{3-\gamma}  \right]-(3-\gamma) \left[ 3i^{2-\gamma} + \left(i-1\right)^{2-\gamma}  \right]
 + (3-\gamma)(2-\gamma)i^{1-\gamma}, 1\!\leq i\leq\! N-1
$$
and $\gamma_{0}={(2-\gamma)(1-\gamma)} 2^{\gamma-1}$,  $\gamma_{i}=\beta_{i+\frac{1}{2}} $, $i\geq 1.$

\subsection{Collocation method for nonlocal model  \eqref{n1.1}}
Based on the discussion of the integral \eqref{n1.2},
we now provide the numerical schemes for nonlocal model  \eqref{n1.1}.

{\bf Case I:  PLC for  nonlocal model \eqref{n1.1}.} From  \eqref{n2.1},
E.q. (\ref{n1.1}) reduces to
\begin{equation}\label{n2.6}
\begin{split}
\int^{b}_{a} \frac{ u(x_i)}{\left| x_{i} - y \right|^{\gamma}} dy - I_1(a,b,x_{i}) = f(x_{i})+R_i, \quad i=1,2,\cdots,N-1,
\end{split}
\end{equation}
where the local truncation  error  $R_i=\mathcal{O}(h^2)$ will be proved in Lemma \ref{nlemma3.1}.
Let $u_i$ be the approximated value of $u(x_i)$ and  $f_i=f(x_i)$. Then the discretization scheme is
\begin{equation}\label{n2.7}
\begin{split}
&\sigma_{h,\gamma}\left[d_{i} u_{i}-\sum^{N-1}_{j=1}g_{|i-j|}u_j\right]=f_{i}+\sigma_h^1\left(\alpha_iu_0+\alpha_{N-i}u_N\right),\quad 1\leq i\leq N-1.
\end{split}
\end{equation}
Here the coefficients $\sigma_{h,\gamma}$, $\alpha_i$, $g_{|i-j|}$ are given in \eqref{n2.1}, and
$$d_{i}=(2-\gamma)\left[i^{1-\gamma}+(N-i)^{1-\gamma}\right].$$
For the convenience of implementation, we use the matrix form of the grid functions
$$U=(u_{1},u_{2},\cdots,u_{N-1})^{T},~~F=(f_{1},f_{2},\cdots,f_{N-1})^{T},$$
therefore, E.q. \eqref{n2.7}  can be rewritten as
\begin{equation}\label{n2.8}
  \sigma_{h,\gamma}(D-G)U=F+\sigma_{h,\gamma}H,
\end{equation}
where $D={\rm diag}\left(d_1,d_2,\ldots, d_{N-1}\right)$,
$G={\rm toeplitz}\left(g_0,g_1,\ldots, g_{N-2}\right)$ and
$$H=(\alpha_{1},\alpha_{2},\cdots,\alpha_{N-1})^{T}u_0+(\alpha_{N-1},\alpha_{N-2},\cdots,\alpha_{1})^{T}u_N.$$

{\bf Case II: PQC for  nonlocal model \eqref{n1.1}.}
From \eqref{n2.3}, we can rewrite (\ref{n1.1}) as
\begin{equation}\label{n2.9}
\begin{split}
\int^{b}_{a} \frac{ u\left(x_\frac{i}{2}\right)}{\left| x_{\frac{i}{2}} - y \right|^{\gamma}} dy - I_{2}\left(a,b,x_{\frac{i}{2}}\right)
= f\left(x_\frac{i}{2}\right)+R_\frac{i}{2}, ~~ 1\leq i\leq 2N-1.
\end{split}
\end{equation}
Here we will prove  that  the local truncation  error is  $R_\frac{i}{2}=\mathcal{O}\left(h^4\left(\eta_\frac{i}{2}\right)^{-\gamma}\right)$  in Theorem \ref{nlemma3.7}.
Let $u_\frac{i}{2}$ be the approximated value of $u(x_\frac{i}{2})$ and  $f_\frac{i}{2}=f(x_\frac{i}{2})$.
According to \eqref{n2.3}-\eqref{n2.5},  then the discretization scheme is the following systems
\begin{equation}\label{n2.10}
\begin{split}
&\eta_{h,\gamma}\left[d_{i} u_{i}-\sum^{N-1}_{j=1}m_{|i-j|}u_j-\sum^{N-1}_{j=0}q_{|i-j-\frac{1}{2}|-\frac{1}{2}}u_{j+\frac{1}{2}}\right]\\
&\quad=f_{i}+\eta_{h,\gamma}\left(\beta_iu_0+\beta_{N-i}u_N\right)\qquad\qquad{\rm for}~~1\leq i\leq N-1, \\\\
&\eta_{h,\gamma}\left[d_{i+\frac{1}{2}} u_{i+\frac{1}{2}}-\sum^{N-1}_{j=1}p_{|i+\frac{1}{2}-j|-\frac{1}{2}}u_j-\sum^{N-1}_{j=0}n_{|i-j|}u_{j+\frac{1}{2}}\right]\\
&\quad=f_{i+\frac{1}{2}}+\eta_{h,\gamma}\left(\gamma_iu_0+\gamma_{N-i-1}u_N\right)\qquad\,\,{\rm for}~~0\leq i\leq N-1,
\end{split}
\end{equation}
where
\begin{equation*}
\begin{split}
 d_{\frac{i}{2}} = (3-\gamma)(2-\gamma)\left( \left(\frac{i}{2}\right)^{1-\gamma} + \left(N-\frac{i}{2}\right)^{1-\gamma} \right), \quad i=1,2,\cdots,2N-1,
\end{split}
\end{equation*}
and  the coefficients $\eta_{h,\gamma}$, $\beta_i$, $\gamma_i$, $m_{|i-j|}$, $n_{|i-j|}$, $p_{|i+\frac{1}{2}-j|-\frac{1}{2}}$, $q_{|i-j-\frac{1}{2}|-\frac{1}{2}}$
are given  in \eqref{n2.4} and \eqref{n2.5}.
For the convenience of implementation, we use the matrix form of the grid functions $U=\left(u_{1},u_{2},\cdots,u_{N-1},u_{\frac{1}{2}},u_{\frac{3}{2}},\cdots,u_{N-\frac{1}{2}}\right)^{T}$
and similarly for $F$.
Therefore, we   can be rewrite  \eqref{n2.10} as the following systems
\begin{equation}\label{n2.11}
\begin{split}
\eta_{h,\gamma}\mathcal{A}U=F+\eta_{h,\gamma}K
\end{split}
\end{equation}
with
\begin{equation*}
\begin{split}
\mathcal{A}=\left [ \begin{matrix}
 \mathcal{D}_{1}  & 0\\
 0      &\mathcal{D}_{2}
 \end{matrix}
 \right ]
-
\left [ \begin{matrix}
  \mathcal{M}     & \mathcal{Q }  \\
 \mathcal{ P }        & \mathcal{N}
 \end{matrix}
 \right ]
\end{split}.
\end{equation*}
Here $\mathcal{D}_1={\rm diag}\left(d_1,d_2,\ldots, d_{N-1}\right)$, $\mathcal{D}_2={\rm diag}\left(d_\frac{1}{2},d_\frac{3}{2},\ldots, d_{N-\frac{1}{2}}\right)$,
$$\mathcal{M}={\rm toeplitz}\left(m_0,m_1,\ldots, m_{N-2}\right),~~\mathcal{N}={\rm toeplitz}\left(n_0,n_1,\ldots, n_{N-1}\right),$$ and
\begin{equation*}
\begin{split}
  K&=(\beta_{1},\beta_{2},\cdots,\beta_{N-1},\gamma_0,\gamma_1,\cdots,\gamma_{N-1})^{T}u_0\\
&\quad+(\beta_{N-1},\beta_{N-2},\cdots,\beta_{1},\gamma_{N-1},\gamma_{N-2},\cdots,\gamma_0)^{T}u_N.
\end{split}
\end{equation*}
The {\em rectangular matrices} $\mathcal{P}$, $\mathcal{Q}$ are defined by

\begin{equation*}
\begin{split}
\mathcal{P}=\left [ \begin{matrix}
p_{0}              & p_{1}              & p_{2}             &     \cdots & p_{N-3}   & p_{N-2}        \\
p_{0}              & p_{0}              & p_{1}             &     \cdots & p_{N-4}   & p_{N-3}         \\
p_{1}              & p_{0}              & p_{0}             &     \cdots & p_{N-5}   & p_{N-3}          \\
\vdots             & \vdots             &  \vdots           &     \ddots & \vdots    & \vdots            \\
p_{N-4}            & p_{N-5}            & p_{N-6}           &     \cdots & p_{0}     & p_{1}              \\
p_{N-3}            & p_{N-4}            & p_{N-5}           &     \cdots & p_{0}     & p_{0}               \\
p_{N-2}            & p_{N-3}            & p_{N-4}           &     \cdots & p_{1}     & p_{0}
 \end{matrix}
 \right ]_{N \times (N-1)}
\end{split}
\end{equation*}
and
\begin{equation*}
\begin{split}
\mathcal{Q}=\left [ \begin{matrix}
q_{0}              & q_{0}              & q_{1}             &     \cdots & q_{N-4}   & q_{N-3}   & q_{N-2}      \\
q_{1}              & q_{0}              & q_{0}             &     \cdots & q_{N-5}   & q_{N-4}   & q_{N-3}       \\
q_{2}              & q_{1}              & q_{0}             &     \cdots & q_{N-6}   & q_{N-5}   & q_{N-4}        \\
\vdots             & \vdots             &  \vdots           &     \ddots & \vdots    & \vdots    & \vdots           \\
q_{N-3}            & q_{N-4}            & q_{N-5}           &     \cdots & q_{0}     & q_{0}     &  q_{1}            \\
q_{N-2}            & q_{N-3}            & q_{N-4}           &     \cdots & q_{1}     & q_{0}     &  q_{0}
 \end{matrix}
 \right ]_{(N-1) \times N}
\end{split}.
\end{equation*}

\section{Local truncation error for integral \eqref{n1.2}}
As is well known,   an optimal  error estimate with  $\mathcal{O}\left(h^2\right)$ was proved by PLC and only $\mathcal{O}\left(h^3\right)$ convergence was established by PQC in \cite{Aikinson:67}.
To the best of our knowledge, the quasi-optimal  error estimate with  $\mathcal{O}\left(h^{4-\gamma}\right)$ convergence was provided by PQC, see \cite{Hoog:73} or \cite[p.\,525]{Aikinson:09}.
However, it is still not an optimal  error estimate  when the singular point coincides with an element junction point.
Based on the idea of \cite{GFTJZ:18,LiSun:10,WL:05}, we next provide  an optimal error  $\mathcal{O}\left(h^4\eta_i^{-\gamma}\right)$, $\eta_i=\min\left\{x_{i}-a,b-x_{i}\right\}$
for the integral \eqref{n1.2} by PQC.

Using Lagrange interpolation and the property of  weakly singular of integral \eqref{n1.2}, we obtain the following local truncation error for integral \eqref{n1.2} by PLC.
\begin{lemma}\cite{Aikinson:67}\label{nlemma3.1}
Let $I(a,b,x_{i})$ and $I_{1}(a,b,x_{i})$ be defined by (\ref{n1.2}) and (\ref{n2.1}),  respectively. If  $ u(x) \in C^{2}[a,b] $,  then
\begin{equation*}
\left|I(a,b,x_{i}) - I_{1}(a,b,x_{i}) \right| =\mathcal{O}(h^2).
\end{equation*}
\end{lemma}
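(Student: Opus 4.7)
The plan is to reduce the estimate to the pointwise interpolation error of the piecewise linear interpolant, then absorb the weak singularity of the kernel into a bounded integral. First, I would write $I_1(a,b,x_i) = \int_a^b u_L(y)\,|x_i-y|^{-\gamma}\,dy$, where $u_L = \sum_{j=0}^N u(x_j)\phi_j$ is the piecewise linear interpolant of $u$ on the uniform partition $\{x_j\}$. This gives
\begin{equation*}
I(a,b,x_i) - I_1(a,b,x_i) = \int_a^b \frac{u(y) - u_L(y)}{|x_i - y|^{\gamma}}\, dy.
\end{equation*}

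Next I would apply the standard Lagrange remainder on each subinterval: for $y \in [x_j,x_{j+1}]$ one has $u(y) - u_L(y) = \tfrac{1}{2} u''(\xi_j)(y-x_j)(y-x_{j+1})$ for some $\xi_j$, so that the uniform bound
\begin{equation*}
|u(y) - u_L(y)| \le \tfrac{1}{8}\,h^{2}\, \|u''\|_{\infty}, \qquad y \in [a,b],
\end{equation*}
holds. Substituting this pointwise bound yields
\begin{equation*}
\bigl|I(a,b,x_i) - I_1(a,b,x_i)\bigr| \le \tfrac{1}{8}\,h^{2}\,\|u''\|_{\infty} \int_a^b |x_i - y|^{-\gamma}\, dy.
\end{equation*}

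The remaining factor is handled by splitting at $x_i$: since $0<\gamma<1$,
\begin{equation*}
\int_a^b |x_i - y|^{-\gamma}\, dy = \frac{(x_i - a)^{1-\gamma} + (b - x_i)^{1-\gamma}}{1-\gamma} \;\le\; \frac{2(b-a)^{1-\gamma}}{1-\gamma},
\end{equation*}
so the weak singularity of the kernel contributes only a bounded constant depending on $\gamma$, $a$, and $b$. Combining the two estimates gives the desired $\mathcal{O}(h^2)$ bound.

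The only delicate point is the behaviour on the two subintervals $[x_{i-1},x_i]$ and $[x_i,x_{i+1}]$ that touch the singularity, but since $x_i$ is a node we have $u_L(x_i)=u(x_i)$, the pointwise Lagrange remainder remains valid there, and the weight $|x_i-y|^{-\gamma}$ is integrable with $0<\gamma<1$; no local refinement or subtraction of a regularizing term is needed. I expect this step (verifying that the naive pointwise bound is not degraded near the singularity) to be the only spot requiring a moment of care, since for the quadratic case treated later the analogous estimate is not sharp and must be sharpened by exploiting the superconvergence structure explicitly.
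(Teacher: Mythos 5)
Your argument is correct and is essentially the same as the one the paper relies on: the paper states this lemma with a citation to Atkinson (1967) and justifies it in one line by "Lagrange interpolation and the property of weakly singular of the integral," which is precisely your combination of the uniform $\tfrac{1}{8}h^{2}\|u''\|_{\infty}$ interpolation bound with the integrability of $|x_i-y|^{-\gamma}$ for $0<\gamma<1$. No discrepancy to report.
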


\subsection{A few technical Lemmas}
Let us  first introduce some lemmas, which will be used to estimate the  local truncation error for integral \eqref{n1.2} by PQC.
\begin{lemma}\label{nlemma3.2}
Let $0<\gamma<1$, $ u(y) \in C^{4}[a,b] $ and $u_Q(y)$ be defined by \eqref{n2.2}.  Then
\begin{equation*}
 Q_{\frac{i}{2}}:=\int_{x_{\lceil\frac{i}{2}\rceil-1}}^{x_{\lfloor\frac{i}{2}\rfloor+1}} \frac{u(y)-u_Q(y)}
      {\left|x_{\frac{i}{2}}-y\right|^\gamma}dy
   =\mathcal{O}(h^{5-\gamma}),
\end{equation*}
where $i$ is a positive integer number, $\lfloor\frac{i}{2}\rfloor$ and $\lceil\frac{i}{2}\rceil$ denotes the greatest integer that is less than or equal to $\frac{i}{2}$ and the least integer that is greater than or equal to $\frac{i}{2}$, respectively.
\end{lemma}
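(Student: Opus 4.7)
The plan is to rescale so that the singular point sits at the origin and then to exploit an odd-symmetry of the cubic piece of the interpolation error, which gains one full power of $h$ beyond the naive $\mathcal{O}(h^{4-\gamma})$ bound one would get from simply inserting the Lagrange remainder into the weakly singular integral. Setting $y=x_{i/2}+hs$ and $\tilde u(s)=u(x_{i/2}+hs)$ yields $|x_{i/2}-y|^{-\gamma}dy=h^{1-\gamma}|s|^{-\gamma}ds$ and $\tilde u^{(k)}(0)=h^k u^{(k)}(x_{i/2})$. Two subcases arise according to the parity of $i$: when $i$ is even the rescaled integration domain is $[-1,1]$ and the piecewise quadratic interpolant $\tilde u_Q$ uses the nodes $\{-1,-\tfrac12,0\}$ on $[-1,0]$ and $\{0,\tfrac12,1\}$ on $[0,1]$; when $i$ is odd the domain is $[-\tfrac12,\tfrac12]$ and $\tilde u_Q$ is the single quadratic interpolant at $\{-\tfrac12,0,\tfrac12\}$. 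In both configurations the interpolation nodes are symmetric about the origin, which is the essential geometric fact.

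Next I would insert the cubic Taylor decomposition $\tilde u(s)=P_3(s)+\rho(s)$ with $|\rho(s)|\le Ch^4|s|^4$. Because piecewise quadratic interpolation reproduces quadratic polynomials, only the cubic part of $P_3$ and the remainder $\rho$ survive in $\tilde u-\tilde u_Q$, giving
\begin{equation*}
\tilde u(s)-\tilde u_Q(s)=\frac{\tilde u'''(0)}{6}\,e_3(s)+\bigl(\rho(s)-\rho_Q(s)\bigr),
\end{equation*}
where $e_3(s)$ is the piecewise cubic interpolation error of $s^3$. A short Lagrange calculation factors $e_3$ into the product of $(s-s_k)$ over the local nodes in each piece, producing $e_3(s)=s(s-\tfrac12)(s-1)$ on $[0,1]$, $e_3(s)=s(s+\tfrac12)(s+1)$ on $[-1,0]$, and $e_3(s)=s(s-\tfrac12)(s+\tfrac12)$ on $[-\tfrac12,\tfrac12]$. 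One checks directly that $e_3(-s)=-e_3(s)$ in each configuration, so $e_3$ is a globally odd function on the rescaled domain. Because $|s|^{-\gamma}$ is even and the domain is symmetric about $0$, the cubic contribution $\int e_3(s)/|s|^{\gamma}\,ds$ vanishes identically and the $\mathcal{O}(h^3)$-magnitude cubic term disappears from the estimate.

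What remains is the quartic remainder. Since $|\rho(s)|\le Ch^4$ uniformly on the bounded rescaled domain and the Lebesgue constant of quadratic interpolation at three fixed nodes is a universal constant, one also has $|\rho(s)-\rho_Q(s)|\le Ch^4$. With $0<\gamma<1$ the bound
\begin{equation*}
\left|\int\frac{\rho(s)-\rho_Q(s)}{|s|^{\gamma}}\,ds\right|\le Ch^4\int|s|^{-\gamma}\,ds=\mathcal{O}(h^4)
\end{equation*}
then combines with the $h^{1-\gamma}$ Jacobian from the change of variables to give $Q_{i/2}=h^{1-\gamma}\cdot\mathcal{O}(h^4)=\mathcal{O}(h^{5-\gamma})$, as claimed.

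The main obstacle is securing the odd symmetry of $e_3$ in the two-element configuration, since the interpolants on $[-1,0]$ and $[0,1]$ are distinct quadratics that must be computed explicitly and verified to assemble into a single globally odd function; this is the sole ingredient that elevates the bound from the routine $\mathcal{O}(h^{4-\gamma})$ to the sharp $\mathcal{O}(h^{5-\gamma})$, and the rest of the argument is a standard $C^4$ Taylor-remainder estimate against an integrable singularity.
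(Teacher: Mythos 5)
Your proof is correct. The underlying cancellation you exploit — antisymmetry of the cubic-order interpolation error about the collocation point $x_{\frac{i}{2}}$, which kills the would-be $\mathcal{O}(h^{4-\gamma})$ term and leaves only a quartic remainder — is exactly the mechanism behind the paper's proof, but your execution differs in a useful way. The paper works element by element with the Lagrange remainder $u(y)-u_Q(y)=\frac{u^{(3)}(\xi)}{3!}\,\omega(y)$, computes the weighted moment of the nodal cubic on each of the two adjacent elements explicitly (they are equal and opposite), and obtains $Q_{\frac{i}{2}}$ proportional to $u^{(3)}(\xi_{\frac{i}{2}})-u^{(3)}(\xi_{\frac{i}{2}-1})$ times $h^{4-\gamma}$ in the even case, and exact cancellation in the odd case; this presentation treats the mean-value point $\xi$ as independent of $y$, a common shortcut your argument does not need. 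Your route instead Taylor-expands $u$ at the singular point, uses reproduction of quadratics to isolate $\frac{\tilde u'''(0)}{6}\,e_3(s)$ plus a uniformly $\mathcal{O}(h^4)$ remainder, and disposes of the cubic term by the oddness of $e_3$ against the even weight $|s|^{-\gamma}$ on the symmetric rescaled domain — handling both parities of $i$ in one stroke and avoiding the $\xi(y)$ issue entirely. What the paper's elementwise moment computation buys is the explicit form of the leading term, which it reuses in the companion Lemmas 3.3 and 3.4 (the terms $J_2$ and $\widetilde J_2$ there must cancel each other); for the present lemma, where only the order matters, your argument is complete and, if anything, tighter.
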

\begin{proof}
If $i$ is even, we have
\begin{equation*}
\begin{split}
  \int_{x_{\frac{i}{2}}}^{x_{\frac{i}{2}+1}}\frac{\left(y-x_{\frac{i}{2}}\right)\left(y-x_{\frac{i+1}{2}}\right)
  \left(y-x_{\frac{i}{2}+1}\right)}{\left( y-x_{\frac{i}{2}} \right)^\gamma}dy
  =  & h^{4-\gamma}\int^{1}_{0}\frac{t\left(t-\frac{1}{2}\right)(t-1)}{t^{\gamma}} dt, \\
  \int^{x_{\frac{i}{2}}}_{x_{\frac{i}{2}-1}}\frac{\left(y-x_{\frac{i}{2}-1}\right)\left(y-x_{\frac{i-1}{2}}\right)
  \left(y-x_{\frac{i}{2}}\right)} {\left(x_{\frac{i}{2}}-y\right)^{\gamma}}dy
  =  & -h^{4-\gamma}\int^{1}_{0}\frac{t\left(t-\frac{1}{2}\right)(t-1)}{t^{\gamma}} dt.
\end{split}
\end{equation*}
From  Taylor expansion,  there exist $\xi_{\frac{i}{2}}\in [x_{\frac{i}{2}},x_{\frac{i}{2}+1}]$ and $\xi_{\frac{i}{2}-1}\in [x_{\frac{i}{2}-1},x_{\frac{i}{2}}]$ such that
$$u(y)-u_Q(y)=\frac{u^{(3)}\left(\xi_{\frac{i}{2}}\right) }{3!}\left(y-x_{\frac{i}{2}}\right)\left(y-x_{\frac{i+1}{2}}\right)\left(y-x_{\frac{i}{2}+1}\right)
~~\forall~y\in \left[x_{\frac{i}{2}},x_{\frac{i}{2}+1}\right] ; $$
and
$$u(y)-u_Q(y)=\frac{u^{(3)}\left(\xi_{\frac{i}{2}-1}\right) }{3!}\left(y-x_{\frac{i}{2}-1}\right)\left(y-x_{\frac{i-1}{2}}\right)\left(y-x_{\frac{i}{2}}\right)
~~\forall~y\in \left[x_{\frac{i}{2}-1},x_{\frac{i}{2}}\right]. $$
Then
\begin{equation*}
\begin{split}
 Q_{\frac{i}{2}}
&=\int_{x_{\frac{i}{2}-1}}^{x_{\frac{i}{2}+1}} \frac{u(y)-u_Q(y)}{\left|x_{\frac{i}{2}}-y\right|^\gamma}dy
  =\int_{x_{\frac{i}{2}-1}}^{x_{\frac{i}{2}}} \frac{u(y)-u_Q(y)}{\left(x_{\frac{i}{2}}-y\right)^\gamma}dy
    + \int_{x_{\frac{i}{2}}}^{x_{\frac{i}{2}+1}} \frac{u(y)-u_Q(y)}{\left(y-x_{\frac{i}{2}}\right)^\gamma}dy \\
& = \left(\frac{u^{(3)}\left(\xi_{\frac{i}{2}}\right) }{3!}-\frac{u^{(3)}\left(\xi_{\frac{i}{2}-1}\right) }{3!}\right) h^{4-\gamma}
     \int^{1}_{0}\frac{t(t-\frac{1}{2})(t-1)}{t^{\gamma}} dt \\
& = \frac{\gamma\left(u^{(3)}\left(\xi_{\frac{i}{2}}\right)-u^{(3)}\left(\xi_{\frac{i}{2}-1}\right)\right)}{12(4-\gamma)(3-\gamma)(2-\gamma)}h^{4-\gamma}
  =\mathcal{O}\left(h^{5-\gamma}\right).
\end{split}
\end{equation*}
If $i$ is odd, it yields
\begin{equation*}
\begin{split}
\int_{x_{\frac{i}{2}}}^{x_{\frac{i+1}{2}}}\frac{\left(y-x_{\frac{i-1}{2}}\right)\left(y-x_{\frac{i}{2}}\right)
  \left(y-x_{\frac{i+1}{2}}\right)}{\left( y-x_{\frac{i}{2}} \right)^\gamma}dy
& = h^{4-\gamma}\int^{\frac{1}{2}}_{0}\frac{\left(t+\frac{1}{2}\right)t\left(t-\frac{1}{2}\right)}{t^{\gamma}}dt, \\
\int^{x_{\frac{i}{2}}}_{x_{\frac{i-1}{2}}}\frac{\left(y-x_{\frac{i-1}{2}}\right)\left(y-x_{\frac{i}{2}}\right)
  \left(y-x_{\frac{i+1}{2}}\right)} {\left(x_{\frac{i}{2}}-y\right)^{\gamma}}dy
& =- h^{4-\gamma}\int^{\frac{1}{2}}_{0}\frac{\left(t+\frac{1}{2}\right)t\left(t-\frac{1}{2}\right)}{t^{\gamma}}dt.
\end{split}
\end{equation*}
Using  Taylor  expansion,  there exist $\xi\in \left[x_{\frac{i-1}{2}},x_{\frac{i+1}{2}}\right]$
$$u(y)-u_Q(y)=\frac{u^{(3)}\left(\xi\right) }{3!}\left(y-x_{\frac{i-1}{2}}\right)\left(y-x_{\frac{i}{2}}\right)\left(y-x_{\frac{i+1}{2}}\right)
~~\forall~y\in \left[x_{\frac{i-1}{2}},x_{\frac{i+1}{2}}\right]. $$
Therefore, we have
\begin{equation*}
\begin{split}
 Q_{\frac{i}{2}}
&=\int_{x_{\frac{i-1}{2}}}^{x_{\frac{i+1}{2}}} \frac{u(y)-u_Q(y)}{\left|x_{\frac{i}{2}}-y\right|^\gamma}dy
  =\int_{x_{\frac{i-1}{2}}}^{x_{\frac{i}{2}}} \frac{u(y)-u_Q(y)}{\left(x_{\frac{i}{2}}-y\right)^\gamma}dy
    + \int_{x_{\frac{i}{2}}}^{x_{\frac{i+1}{2}}} \frac{u(y)-u_Q(y)}{\left(y-x_{\frac{i}{2}}\right)^\gamma}dy \\
& =\frac{u^{(3)}\left(\xi\right) }{3!}h^{4-\gamma}
    \left(\int^{\frac{1}{2}}_{0}\frac{\left(t+\frac{1}{2}\right)t\left(t-\frac{1}{2}\right)}{t^{\gamma}}dt
    -\int^{\frac{1}{2}}_{0}\frac{\left(t+\frac{1}{2}\right)t\left(t-\frac{1}{2}\right)}{t^{\gamma}}dt\right)=0.
\end{split}
\end{equation*}
The proof is completed.
\end{proof}
\begin{lemma}\label{nlemma3.3}
Let $0<\gamma<1$, $ u(y) \in C^{4}[a,b] $ and $u_Q(y)$ be defined by \eqref{n2.2}.  Then
\begin{equation*}
\begin{split}
& Q_l:=\int^{x_{\lceil\frac{i}{2}\rceil-1}}_{x_{0}}\frac{u(y)-u_Q(y)}{\left|x_{\frac{i}{2}}-y\right|^\gamma}dy\\
&=-h^{4-\gamma}\cdot u^{(3)}\big(x_{\frac{i}{2}}\big) \sum^{\lceil\frac{i}{2}\rceil-1}_{m=1}\int^{1}_{0}\frac{t\left(t-\frac{1}{2}\right)(t-1)}{\left(\frac{i}{2}-m+t\right)^{\gamma}} dt
+\mathcal{O}(h^4)\left(x_{\frac{i}{2}}-a\right)^{1-\gamma}+\mathcal{O}(h^{5-\gamma}),
\end{split}
\end{equation*}
where $i$ is a positive integer number and  $\lceil\frac{i}{2}\rceil$ denotes the least integer that is greater than or equal to $\frac{i}{2}$.
\end{lemma}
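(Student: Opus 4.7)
The plan is to split $Q_l$ over the $\lceil i/2\rceil - 1$ full mesh cells lying strictly to the left of the singular point $x_{i/2}$, apply the standard Lagrange quadratic interpolation remainder on each cell, reduce each resulting integral to a dimensionless one on $[0,1]$ by an affine change of variables, and finally freeze $u^{(3)}$ at $x_{i/2}$ and estimate the resulting discrepancy by a moment-cancellation argument.

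First, since $y<x_{i/2}$ on $[x_0, x_{\lceil i/2\rceil-1}]$, I would write
\begin{equation*}
Q_l = \sum_{m=1}^{\lceil i/2\rceil - 1} \int_{x_{m-1}}^{x_m} \frac{u(y)-u_Q(y)}{(x_{i/2}-y)^\gamma}\,dy.
\end{equation*}
On each subinterval $u_Q$ is the quadratic Lagrange interpolant of $u$ through $x_{m-1}$, $x_{m-1/2}$, $x_m$, so
\begin{equation*}
u(y)-u_Q(y) = \frac{u^{(3)}(\xi_m)}{6}(y-x_{m-1})(y-x_{m-1/2})(y-x_m),\qquad \xi_m \in (x_{m-1}, x_m).
\end{equation*}
The same affine substitution $y=x_m-th$ that drives Lemma~\ref{nlemma3.2} then gives, in a straightforward way,
\begin{equation*}
\int_{x_{m-1}}^{x_m} \frac{u(y)-u_Q(y)}{(x_{i/2}-y)^\gamma}\,dy = -\frac{u^{(3)}(\xi_m)}{6}\,h^{4-\gamma} \int_0^1 \frac{t(t-1/2)(t-1)}{(i/2-m+t)^\gamma}\,dt,
\end{equation*}
so the full sum is a linear combination of $u^{(3)}(\xi_m)$ against these model integrals.

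Next, I would replace $u^{(3)}(\xi_m)$ by $u^{(3)}(x_{i/2})$ and control the discrepancy $\Delta_m := u^{(3)}(\xi_m)-u^{(3)}(x_{i/2})$, which satisfies $|\Delta_m| \le \|u^{(4)}\|_\infty\,|x_{i/2}-\xi_m| \le C(i/2-m+1)h$. The frozen-coefficient piece matches the principal term of the claim; the replacement error $E$ obeys
\begin{equation*}
|E| \le C\,h^{5-\gamma} \sum_{m=1}^{\lceil i/2\rceil - 1}(i/2-m+1)\,\left|\int_0^1 \frac{t(t-1/2)(t-1)}{(i/2-m+t)^\gamma}\,dt\right|.
\end{equation*}

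The hard part is to obtain a sharp bound on the model integrals $I_m := \int_0^1 t(t-1/2)(t-1)(i/2-m+t)^{-\gamma}\,dt$. The cubic factor has vanishing zeroth moment on $[0,1]$, so Taylor-expanding the smooth weight $(i/2-m+t)^{-\gamma}$ about $t=1/2$ produces the sharp estimate $|I_m| \le C(i/2-m)^{-\gamma-1}$ uniformly for $1 \le m \le \lceil i/2\rceil - 1$. This extra factor of $(i/2-m)^{-1}$ beyond the naive $(i/2-m)^{-\gamma}$ is exactly what the claim requires. Substituting the bound and writing $k:=i/2-m$,
\begin{equation*}
|E| \le C\,h^{5-\gamma}\sum_{k \ge 1/2} (k+1)\,k^{-\gamma-1} \le C\,h^{5-\gamma}\,(i/2)^{1-\gamma} = C\,h^4\,(x_{i/2}-a)^{1-\gamma},
\end{equation*}
where the sum is bounded by the integral test using $0<\gamma<1$ and the final equality uses $x_{i/2}-a=(i/2)h$. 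This is the first remainder; the separate $\mathcal{O}(h^{5-\gamma})$ absorbs the boundary contribution from $m=\lceil i/2\rceil-1$, where $i/2-m$ is only $1/2$ or $1$ and the corresponding summand is $\mathcal{O}(1)$ rather than being controlled by the telescoping estimate above. Combining everything yields the stated identity.
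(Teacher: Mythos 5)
Your proposal is correct and follows essentially the same route as the paper: split $Q_l$ over the cells to the left of $x_{\frac{i}{2}}$, use the cubic interpolation remainder, freeze $u^{(3)}$ at $x_{\frac{i}{2}}$, and exploit the vanishing moment $\int_0^1 t\left(t-\tfrac12\right)(t-1)\,dt=0$ to gain the extra decay $\left(\tfrac{i}{2}-m\right)^{-1-\gamma}$ in the model integrals, which after summation yields the $\mathcal{O}(h^4)\left(x_{\frac{i}{2}}-a\right)^{1-\gamma}+\mathcal{O}(h^{5-\gamma})$ remainder. The only cosmetic differences are that you realize the moment cancellation by expanding the weight about $t=\tfrac12$ rather than by integration by parts, and you absorb in a single replacement error what the paper splits into its $J_1$ and $J_3$ terms; note also that your frozen coefficient correctly carries the factor $\tfrac{1}{3!}$, consistent with the paper's proof.
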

\begin{proof}
Since $x_{\lceil\frac{i}{2}\rceil-1}=x_0$ with $i=1,2$, it yields  $Q_l=0$. Then we just need to estimate  $Q_l$ with $i\geq 3$.
For any $y\in[x_{m-1},x_m]$, using  Taylor expansion, there exist $\xi_{m}\in [x_{m-1},x_m]$ such that
$$u(y)-u_Q(y)=\frac{u^{(3)}(\xi_{m}) }{3!}(y-x_{m-1})\left(y-x_{m-\frac{1}{2}}\right)(y-x_{m}).  $$
For the sake of simplicity,  we take $w(\xi_{m})=\frac{u^{(3)}(\xi_{m}) }{3!}$ and
\begin{equation*}
\begin{split}
w(\xi_{m})
&=\left[w(\xi_{m})-w(x_{m})\right]+w(x_{m})\\
&=\left[w(\xi_{m})-w(x_{m})\right]+ w\left(x_{\frac{i}{2}}\right) +w'(\eta_{m})\left(x_{m}-x_{\frac{i}{2}}\right), \eta_{m}\in[x_m,x_{\frac{i}{2}}].
\end{split}
\end{equation*}
Then
\begin{equation*}
Q_l
=  \sum^{\lceil\frac{i}{2}\rceil-1}_{m=1}w(\xi_{m})
      \int^{x_{m}}_{x_{m-1}}\frac{ (y-x_{m-1})\left(y-x_{m-\frac{1}{2}}\right)(y-x_{m})}{\left(x_{\frac{i}{2}}-y\right)^{\gamma}}dy:=J_1+J_2+J_3
\end{equation*}
with
\begin{equation*}
\begin{split}
  J_1 =& \sum^{\lceil\frac{i}{2}\rceil-1}_{m=1}\left[w(\xi_{m})-w(x_{m})\right]
         \int^{x_{m}}_{x_{m-1}}\frac{ (y-x_{m-1})\left(y-x_{m-\frac{1}{2}}\right)(y-x_{m})}{\left(x_{\frac{i}{2}}-y\right)^{\gamma}}dy;\\
  J_2 =&  w\left(x_{\frac{i}{2}}\right)\sum^{\lceil\frac{i}{2}\rceil-1}_{m=1}\int^{x_{m}}_{x_{m-1}}
         \frac{ (y-x_{m-1})\left(y-x_{m-\frac{1}{2}}\right)(y-x_{m})}{\left(x_{\frac{i}{2}}-y\right)^{\gamma}}dy;\\
  J_3 =& \sum^{\lceil\frac{i}{2}\rceil-1}_{m=1} w'(\eta_{m})\left(x_{m}-x_{\frac{i}{2}}\right)\int^{x_{m}}_{x_{m-1}}
         \frac{ (y-x_{m-1})\left(y-x_{m-\frac{1}{2}}\right)(y-x_{m})}{\left(x_{\frac{i}{2}}-y\right)^{\gamma}}dy.
\end{split}
\end{equation*}
Using integration by parts and
 $\int^{1}_{0}\tau\left(\tau-\frac{1}{2}\right)\left(\tau-1\right)d\tau=0$, it  yields
\begin{equation}\label{n3.1}
\begin{split}
&  \int^{x_{m}}_{x_{m-1}}\!\!\!\frac{ (y-x_{m-1})\left(y-x_{m-\frac{1}{2}}\right)(y-x_{m})} {\left(x_{\frac{i}{2}}-y\right)^{\gamma}}dy
=  h^{4-\gamma}\!\!\int^{1}_{0}\!\!\frac{t\left(t-\frac{1}{2}\right)(t-1)}{\left(\frac{i}{2}-m+1-t\right)^{\gamma}} dt\\
&= -h^{4-\gamma}\int^{1}_{0}\frac{t\left(t-\frac{1}{2}\right)(t-1)}{\left(\frac{i}{2}-m+t\right)^{\gamma}} dt
= -\gamma h^{4-\gamma}\int^{1}_{0}\frac{\int^{t}_{0}\tau\left(\tau-\frac{1}{2}\right)(\tau-1)d\tau}{\left(\frac{i}{2}-m+t\right)^{1+\gamma}}dt.
\end{split}
\end{equation}
Moreover, we have
\begin{equation}\label{n3.2}
\begin{split}
&     \sum^{\lceil\frac{i}{2}\rceil-1}_{m=1}
      \int^{1}_{0}\left|\frac{\int^{t}_{0}\tau\left(\tau-\frac{1}{2}\right)(\tau-1)d\tau}{\left(\frac{i}{2}-m+t\right)^{1+\gamma}}\right| dt
\le   \sum^{\lceil\frac{i}{2}\rceil-1}_{m=1}\int^{1}_{0}\frac{1}{\left(\frac{i}{2}-m+t\right)^{1+\gamma}} dt \\
&\le  \sum^{\lceil\frac{i}{2}\rceil-1}_{m=1}\frac{1}{\left(\frac{i}{2}-m\right)^{1+\gamma}} = 2^{1+\gamma}\sum^{\lceil\frac{i}{2}\rceil-1}_{m=1}  \frac{1}{\left(i-2m\right)^{1+\gamma}}\\
&\leq  2^{1+\gamma}\sum^{i-1}_{m=2}  \frac{1}{\left(i-m\right)^{1+\gamma}}   =2^{1+\gamma}\sum^{i-2}_{m=1}  \frac{1}{m^{1+\gamma}}
\leq 2^{1+\gamma}\left( 1+ \frac{1}{\gamma}\right).
\end{split}
\end{equation}
Here, for the last inequality, we use
\begin{equation*}
\begin{split}
  \sum^{i-2}_{m=1}\frac{1}{m^{1+\gamma}}
   = & 1+\sum^{i-2}_{m=2}\frac{1}{m^{1+\gamma}}
   = 1+\sum^{i-2}_{m=2} \int^{m}_{m-1}\frac{1}{m^{1+\gamma}}dx
   \leq 1+\sum^{i-2}_{m=2} \int^{m}_{m-1}\frac{1}{x^{1+\gamma}}dx\\
   = & 1+ \int^{i-2}_{1}\frac{1}{x^{1+\gamma}}dx
   =   1+ \frac{1}{\gamma}-\frac{1}{\gamma\left(i-2\right)^{\gamma}}
   \leq 1+ \frac{1}{\gamma}.
\end{split}
\end{equation*}
From \eqref{n3.1} and \eqref{n3.2}, it leads to
\begin{equation*}
\begin{split}
&\left|J_1\right|\leq h^{5-\gamma}\max_{\eta\in[a,b]} |w'(\eta)|  \gamma 2^{1+\gamma}\left( 1+ \frac{1}{\gamma}\right) = \mathcal{O}(h^{5-\gamma});\\
&J_2 = -h^{4-\gamma}\cdot w\left(x_{\frac{i}{2}}\right) \sum^{\lceil\frac{i}{2}\rceil-1}_{m=1} \int^{1}_{0}\frac{t\left(t-\frac{1}{2}\right)(t-1)}{\left(\frac{i}{2}-m+t\right)^{\gamma}} dt.
\end{split}
\end{equation*}
Next we estimate the error term $J_3$.  Using  \eqref{n3.1} and \eqref{n3.2}, we have
\begin{equation*}
\begin{split}
|J_3|
&\leq \gamma h^{5-\gamma}\max_{\eta\in[a,b]} |w'(\eta)| \sum^{\lceil\frac{i}{2}\rceil-1}_{m=1}\left(\frac{i}{2}-m\right)\int^{1}_{0}\frac{1}{\left(\frac{i}{2}-m+t\right)^{1+\gamma}} dt\\
 &\le  \gamma h^{5-\gamma}\max_{\eta\in[a,b]} |w'(\eta)| \sum^{\lceil\frac{i}{2}\rceil-1}_{m=1}{\left(\frac{i}{2}-m\right)^{-\gamma}}.
\end{split}
\end{equation*}
We can check
\begin{equation*}
\begin{split}
&\sum^{\lceil\frac{i}{2}\rceil-1}_{m=1}{\left(\frac{i}{2}-m\right)^{-\gamma}}=\sum^{\frac{i}{2}-1}_{m=1}\frac{1}{m^\gamma}=\sum^{\frac{i}{2}-1}_{m=1}\int_{m-1}^m\frac{1}{m^\gamma}dx
\leq \sum^{\frac{i}{2}-1}_{m=1}\int_{m-1}^m\frac{1}{x^\gamma}dx\\
&= \int _0^{\frac{i}{2}-1}\frac{1}{x^\gamma}dx
 \leq \frac{1}{1-\gamma}\left(  \frac{i}{2}\right)^{1-\gamma}=  \frac{h^{\gamma-1}}{1-\gamma}\left(x_{\frac{i}{2}}-a\right)^{1-\gamma},~~i {\rm {~is ~even}}.
\end{split}
\end{equation*}
On the other hand, if $i$ is an odd, we have
\begin{equation*}
\begin{split}
&\sum^{\lceil\frac{i}{2}\rceil-1}_{m=1}{\left(\frac{i}{2}-m\right)^{-\gamma}}=\sum^{\frac{i-1}{2}}_{m=1}\frac{1}{\left(m-\frac{1}{2}\right)^\gamma}
=\left(\frac{1}{2}\right)^{-\gamma}+\sum^{\frac{i-1}{2}}_{m=2}\int_{m-\frac{3}{2}}^{m-\frac{1}{2}}\frac{1}{\left(m-\frac{1}{2}\right)^\gamma}dx\\
&\leq  \left(\frac{1}{2}\right)^{-\gamma}+\int_{\frac{1}{2}}^{\frac{i}{2}-1}\frac{1}{x^\gamma}dx
\leq  2\left(\frac{1}{2}\right)^{1-\gamma}+\frac{1}{1-\gamma}\left( \frac{i}{2} \right)^{1-\gamma}\leq \frac{3-2\gamma}{1-\gamma}\left( \frac{i}{2} \right)^{1-\gamma}\\
&= \frac{h^{\gamma-1}\left(3-2\gamma\right)}{1-\gamma}\left(x_{\frac{i}{2}}-a\right)^{1-\gamma}.
\end{split}
\end{equation*}
It implies  that
\begin{equation}\label{n3.3}
|J_3|\leq \!\gamma h^{5-\gamma}\max_{\eta\in[a,b]} |w'(\eta)|  \frac{h^{\gamma-1}\left(3-2\gamma\right)}{1-\gamma}\left(x_{\frac{i}{2}}-a\right)^{1-\gamma}
\!\!= \!\mathcal{O}\left(h^4\right)\left(x_{\frac{i}{2}}-a\right)^{1-\gamma}.
\end{equation}
The proof is completed.
\end{proof}
\begin{lemma}\label{nlemma3.4}
Let $0<\gamma<1$, $ u(y) \in C^{4}[a,b] $ and $u_Q(y)$ be defined by \eqref{n2.2}.  Then
\begin{equation*}
\begin{split}
 Q_r:&=\int^{x_{i}}_{x_{\lfloor\frac{i}{2}\rfloor+1}} \frac{u(y)-u_Q(y)}{\left|x_{\frac{i}{2}}-y\right|^\gamma}dy \\
&=h^{4-\gamma}\cdot w\left(x_{\frac{i}{2}}\right) \sum^{\lceil\frac{i}{2}\rceil-1}_{m=1} \int^{1}_{0}\frac{t\left(t-\frac{1}{2}\right)(t-1)}{\left(\frac{i}{2}-m+t\right)^{\gamma}} dt
+\mathcal{O}(h^4)\left(x_{\frac{i}{2}}-a\right)^{1-\gamma}\!\!+\mathcal{O}\left(h^{5-\gamma}\right),
\end{split}
\end{equation*}
where $i$ is a positive integer number and  $\lfloor\frac{i}{2}\rfloor$ denotes the greatest integer that is less than or equal to $\frac{i}{2}$.
\end{lemma}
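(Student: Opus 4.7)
The plan is to mirror the argument of Lemma~\ref{nlemma3.3} on the right half of the domain. First split $Q_r$ as a sum of element integrals over subintervals $[x_{m-1}, x_m]$ with $m \geq \lfloor\frac{i}{2}\rfloor + 2$, and on each such element use the Lagrange cubic remainder
$$u(y) - u_Q(y) = w(\xi_m)(y - x_{m-1})\left(y - x_{m-\frac{1}{2}}\right)(y - x_m), \quad w := u^{(3)}/3!,$$
followed by the same decomposition $w(\xi_m) = [w(\xi_m) - w(x_{m-1})] + w(x_{\frac{i}{2}}) + w'(\eta_m)(x_{m-1} - x_{\frac{i}{2}})$ used for $Q_l$, so that $Q_r = J_1 + J_2 + J_3$ with exactly the same structural roles for the three pieces.

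The only structural change from the $Q_l$ case is that now $y > x_{\frac{i}{2}}$, so $|x_{\frac{i}{2}} - y|^\gamma = (y - x_{\frac{i}{2}})^\gamma$, which flips one sign and accounts for the $+$ sign of the leading term in the statement. In $J_2$ the substitution $y = x_{m-1} + th$ produces the element integral $h^{4-\gamma}\int_0^1 \frac{t(t-1/2)(t-1)}{(m - 1 - \frac{i}{2} + t)^\gamma}\,dt$, and the relabeling $k = m - \lfloor\frac{i}{2}\rfloor - 1$ brings the sum into the form appearing in the statement. For $J_1$, the bound $|w(\xi_m) - w(x_{m-1})| = O(h)$ together with $\int_0^1 \tau(\tau - \frac{1}{2})(\tau - 1)\,d\tau = 0$ and one integration by parts yields $|J_1| = O(h^{5-\gamma})$ as in Lemma~\ref{nlemma3.3}. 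For $J_3$, the factor $|x_{m-1} - x_{\frac{i}{2}}|$ is bounded by $(m - \frac{i}{2})h$ and the sum $\sum (m - \frac{i}{2})^{-\gamma}$ is estimated exactly as in \eqref{n3.3}, delivering the $O(h^4)$ error term with the weight displayed in the statement.

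An even cleaner route uses the reflection $z = 2x_{\frac{i}{2}} - y$, which maps the domain of $Q_r$ onto a copy of the domain of $Q_l$ for the reflected function $\tilde u(z) := u(2x_{\frac{i}{2}} - z)$, whose piecewise quadratic interpolant is the reflection of $u_Q$ and whose third derivative satisfies $\tilde u^{(3)}(x_{\frac{i}{2}}) = -u^{(3)}(x_{\frac{i}{2}})$. Applying Lemma~\ref{nlemma3.3} to $\tilde u$ then flips the sign of the leading term and reproduces the claimed expression, and the remainders $J_1, J_3$ are carried over verbatim. The main obstacle either way is the parity bookkeeping --- whether $i$ is even or odd changes both the alignment of $x_{\frac{i}{2}}$ with an element boundary and the position of $x_{m-\frac{1}{2}}$ relative to $x_{\frac{i}{2}}$ --- but this case analysis is already handled in Lemmas~\ref{nlemma3.2}--\ref{nlemma3.3} and is directly inherited by the reflection argument, so no new technical work is required beyond verifying the sign flip.
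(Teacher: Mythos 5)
Your proposal is correct and, in its main line, is essentially the paper's own proof: the same elementwise Lagrange remainder, the same three-term splitting of $w(\xi_m)$ anchored at $w\bigl(x_{\frac{i}{2}}\bigr)$, the same integration-by-parts trick exploiting $\int_0^1\tau\bigl(\tau-\tfrac12\bigr)(\tau-1)\,d\tau=0$, and the same sum estimates as in \eqref{n3.2}--\eqref{n3.3}, with the leading sum matching the statement after the index reversal $m\mapsto i-m$ (your relabeling gives the same multiset of denominators). The reflection shortcut you sketch is also legitimate, since $z=2x_{\frac{i}{2}}-y$ maps nodes to nodes and sends $\bigl[x_{\lfloor\frac{i}{2}\rfloor+1},x_i\bigr]$ exactly onto $\bigl[a,x_{\lceil\frac{i}{2}\rceil-1}\bigr]$ (because $x_i-x_{\frac{i}{2}}=x_{\frac{i}{2}}-a$), so Lemma \ref{nlemma3.3} applied to the reflected function yields the sign-flipped leading term directly.
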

\begin{proof}
Since $x_{\lfloor\frac{i}{2}\rfloor+1}=x_i$ with $i=1,2$, it yields  $Q_r=0$. Then we just need to estimate  $Q_r$ with $i\geq 3$.
For any $y\in[x_{m},x_{m+1}]$, using  Taylor  expansion, there exist $\xi_{m}\in [x_{m},x_{m+1}]$ such that
$$u(y)-u_Q(y)=\frac{u^{(3)}(\xi_{m}) }{3!}(y-x_{m})\left(y-x_{m+\frac{1}{2}}\right)(y-x_{m+1}).  $$
For the sake of simplicity,  we take $w(\xi_{m})=\frac{u^{(3)}(\xi_{m}) }{3!}$ and
\begin{equation*}
\begin{split}
w(\xi_{m})
&=\left[w(\xi_{m})-w(x_{m})\right]+w(x_{m})\\
&=\left[w(\xi_{m})-w(x_{m})\right]+ w\left(x_{\frac{i}{2}}\right) +w'(\eta_{m})\left(x_{m}-x_{\frac{i}{2}}\right), \eta_{m}\in[x_m,x_{\frac{i}{2}}].
\end{split}
\end{equation*}
Then
\begin{equation*}
Q_r
=  \sum_{m=\lfloor\frac{i}{2}\rfloor+1}^{i-1}w(\xi_{m})
      \int^{x_{m+1}}_{x_{m}}\frac{ (y-x_{m})\left(y-x_{m+\frac{1}{2}}\right)(y-x_{m+1})}{\left(y-x_{\frac{i}{2}}\right)^{\gamma}}dy:=\widetilde{J}_1+\widetilde{J}_2+\widetilde{J}_3
\end{equation*}
with
\begin{equation*}
\begin{split}
  \widetilde{J}_1 =& \sum_{m=\lfloor\frac{i}{2}\rfloor+1}^{i-1}\left[w(\xi_{m})-w(x_{m})\right]
         \int^{x_{m+1}}_{x_{m}}\frac{ (y-x_{m})\left(y-x_{m+\frac{1}{2}}\right)(y-x_{m+1})}{\left(y-x_{\frac{i}{2}}\right)^{\gamma}}dy;\\
  \widetilde{J}_2 =&  w\left(x_{\frac{i}{2}}\right)\sum_{m=\lfloor\frac{i}{2}\rfloor+1}^{i-1}
         \int^{x_{m+1}}_{x_{m}}\frac{ (y-x_{m})\left(y-x_{m+\frac{1}{2}}\right)(y-x_{m+1})}{\left(y-x_{\frac{i}{2}}\right)^{\gamma}}dy;\\
  \widetilde{J}_3 =& \sum_{m=\lfloor\frac{i}{2}\rfloor+1}^{i-1} w'(\eta_{m})\left(x_{m}-x_{\frac{i}{2}}\right)
         \int^{x_{m+1}}_{x_{m}}\frac{ (y-x_{m})\left(y-x_{m+\frac{1}{2}}\right)(y-x_{m+1})}{\left(y-x_{\frac{i}{2}}\right)^{\gamma}}dy.
\end{split}
\end{equation*}
Using integration by parts and
 $\int^{1}_{0}\tau\left(\tau-\frac{1}{2}\right)\left(\tau-1\right)d\tau=0$, it  yields
\begin{equation*}
\begin{split}
&  \int^{x_{m+1}}_{x_{m}}\frac{ (y-x_{m})\left(y-x_{m+\frac{1}{2}}\right)(y-x_{m+1})}{\left(y-x_{\frac{i}{2}}\right)^{\gamma}}dy\\
&=  h^{4-\gamma}\int^{1}_{0}\frac{t\left(t-\frac{1}{2}\right)(t-1)}{\left(m-\frac{i}{2}+t\right)^{\gamma}} dt
=\gamma h^{4-\gamma}\int^{1}_{0}\frac{\int^{t}_{0}\tau\left(\tau-\frac{1}{2}\right)(\tau-1)d\tau}{\left(m-\frac{i}{2}+t\right)^{1+\gamma}}dt.
\end{split}
\end{equation*}
Moreover, from  (\ref{n3.2}), we have
\begin{equation*}
\begin{split}
&     \sum_{m=\lfloor\frac{i}{2}\rfloor+1}^{i-1}
      \int^{1}_{0}\left|\frac{\int^{t}_{0}\tau\left(\tau-\frac{1}{2}\right)(\tau-1)d\tau}{\left(m-\frac{i}{2}+t\right)^{1+\gamma}}\right|dt
      \le   \sum_{m=\lfloor\frac{i}{2}\rfloor+1}^{i-1}\int^{1}_{0}\frac{1}{\left(m-\frac{i}{2}+t\right)^{1+\gamma}} dt \\
&=  \sum^{\lceil\frac{i}{2}\rceil-1}_{m=1}\int^{1}_{0}\frac{1}{\left(\frac{i}{2}-m+t\right)^{1+\gamma}} dt
\leq 2^{1+\gamma}\left( 1+ \frac{1}{\gamma}\right).
\end{split}
\end{equation*}
According to the above equations, there exists
\begin{equation*}
\begin{split}
\left|\widetilde{J}_1\right|&\leq h^{5-\gamma}\max_{\eta\in[a,b]} |w'(\eta)|  \gamma 2^{1+\gamma}\left( 1+ \frac{1}{\gamma}\right) = \mathcal{O}(h^{5-\gamma});\\
\widetilde{J}_2&=  h^{4-\gamma}w\left(x_{\frac{i}{2}}\right)\sum_{m=\lfloor\frac{i}{2}\rfloor+1}^{i-1}
   \int^{1}_{0}\frac{t\left(t-\frac{1}{2}\right)(t-1)}{\left(m-\frac{i}{2}+t\right)^{\gamma}} dt\\
&= h^{4-\gamma}\cdot w\left(x_{\frac{i}{2}}\right) \sum^{\lceil\frac{i}{2}\rceil-1}_{m=1} \int^{1}_{0}\frac{t\left(t-\frac{1}{2}\right)(t-1)}{\left(\frac{i}{2}-m+t\right)^{\gamma}} dt.
\end{split}
\end{equation*}
Next we estimate the error term $\widetilde{J}_3$.  From \eqref{n3.1} and \eqref{n3.2}, we have
\begin{equation*}
\begin{split}
|\widetilde{J}_3|
&\leq \gamma h^{5-\gamma}\max_{\eta\in[a,b]} |w'(\eta)| \sum_{m=\lfloor\frac{i}{2}\rfloor+1}^{i-1}\left(m-\frac{i}{2}\right)\int^{1}_{0}\frac{1}{\left(m-\frac{i}{2}+t\right)^{1+\gamma}} dt\\
&= \gamma h^{5-\gamma}\max_{\eta\in[a,b]} |w'(\eta)| \sum^{\lceil\frac{i}{2}\rceil-1}_{m=1}\left(\frac{i}{2}-m\right)\int^{1}_{0}\frac{1}{\left(\frac{i}{2}-m+t\right)^{1+\gamma}} dt\\
 &\le  \gamma h^{5-\gamma}\max_{\eta\in[a,b]} |w'(\eta)| \sum^{\lceil\frac{i}{2}\rceil-1}_{m=1}{\left(\frac{i}{2}-m\right)^{-\gamma}}.
\end{split}
\end{equation*}
The similar arguments can be performed as   \eqref{n3.3}, we get
\begin{equation*}
|\widetilde{J}_3|\leq \gamma h^{5-\gamma}\max_{\eta\in[a,b]} |w'(\eta)|  \frac{h^{\gamma-1}\left(3-2\gamma\right)}{1-\gamma}\left(x_{\frac{i}{2}}-a\right)^{1-\gamma} = \mathcal{O}(h^4)\left(x_{\frac{i}{2}}-a\right)^{1-\gamma}.
\end{equation*}
The proof is completed.
\end{proof}
\begin{lemma}\label{nlemma3.5}
Let $0<\gamma<1$, $ u(y) \in C^{4}[a,b] $ and $u_Q(y)$ be defined by \eqref{n2.2}.  Then
\begin{equation*}
\begin{split}
 Q_{c}:&=\int_{x_{i}}^{x_{N}} \frac{u(y)-u_Q(y)}{\left|x_{\frac{i}{2}}-y\right|^\gamma}dy
        =\mathcal{O}\left(h^4\left(x_{\frac{i}{2}}-a\right)^{-\gamma}\right),~~1\leq i \leq N-1.
\end{split}
\end{equation*}
\end{lemma}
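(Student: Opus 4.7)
The plan is to mirror the integration-by-parts device of Lemmas \ref{nlemma3.3} and \ref{nlemma3.4}, but to exploit a crucial simplification: since the integration region $[x_i,x_N]$ is bounded strictly away from the singular point $x_{\frac{i}{2}}$ (we have $y - x_{\frac{i}{2}} \geq \tfrac{i}{2}h > 0$ throughout, so $|x_{\frac{i}{2}}-y|^\gamma = (y-x_{\frac{i}{2}})^\gamma$), no Taylor decomposition of $u^{(3)}(\xi_m)$ around $x_{\frac{i}{2}}$ is needed; a uniform bound on $|u^{(3)}|$ will suffice.

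Concretely, I would split $Q_c = \sum_{m=i}^{N-1}\int_{x_m}^{x_{m+1}}(u(y)-u_Q(y))(y-x_{\frac{i}{2}})^{-\gamma}\,dy$ and apply the standard Lagrange remainder $u(y)-u_Q(y) = \tfrac{1}{6}u^{(3)}(\xi_m)(y-x_m)(y-x_{m+\frac{1}{2}})(y-x_{m+1})$ on each cell $[x_m,x_{m+1}]$. After substituting $y = x_m + th$ with $c_m := m - \tfrac{i}{2} \geq \tfrac{i}{2}$, each inner integral reduces to $h^{4-\gamma}\int_0^1 t(t-\tfrac12)(t-1)(c_m+t)^{-\gamma}\,dt$.

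The decisive step is a single integration by parts using the antiderivative $P(t) := \int_0^t \tau(\tau-\tfrac12)(\tau-1)\,d\tau = \tfrac{1}{4}t^2(t-1)^2$. Because $P(0) = P(1) = 0$ (the latter being the vanishing-moment identity $\int_0^1 \tau(\tau-\tfrac12)(\tau-1)\,d\tau = 0$), the boundary contributions vanish and the inner integral equals $\gamma\int_0^1 P(t)(c_m+t)^{-1-\gamma}\,dt$. Using $0 \leq P(t) \leq \tfrac{1}{64}$ and evaluating the remaining integral explicitly yields the key estimate $\bigl|\int_0^1 t(t-\tfrac12)(t-1)(c_m+t)^{-\gamma}\,dt\bigr| \leq \tfrac{1}{64}\bigl(c_m^{-\gamma}-(c_m+1)^{-\gamma}\bigr)$, which is of telescoping form in $m$.

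Summing over $m = i,\ldots,N-1$ then collapses to $(\tfrac{i}{2})^{-\gamma} - (N-\tfrac{i}{2})^{-\gamma} \leq (\tfrac{i}{2})^{-\gamma}$, and recognizing $\tfrac{i}{2} = (x_{\frac{i}{2}}-a)/h$ converts $h^{4-\gamma}(\tfrac{i}{2})^{-\gamma}$ into exactly $h^4(x_{\frac{i}{2}}-a)^{-\gamma}$, the claimed bound. The only point requiring care is the algebraic factorization $P(t) = t^2(t-1)^2/4$, which is what simultaneously kills the boundary terms in the integration by parts and makes the final $m$-summation telescope; otherwise the argument is routine, since the integrand is nonsingular on the full domain $[x_i,x_N]$.
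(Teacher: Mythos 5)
Your proposal is correct and follows essentially the same route as the paper's proof: cell-wise Lagrange remainder, a single integration by parts exploiting the vanishing moment $\int_0^1\tau(\tau-\tfrac12)(\tau-1)\,d\tau=0$, and a telescoping sum over $m$ that collapses to $\left(\tfrac{i}{2}\right)^{-\gamma}=h^{\gamma}\left(x_{\frac{i}{2}}-a\right)^{-\gamma}$. The only cosmetic difference is that you use the explicit antiderivative $P(t)=\tfrac14 t^2(t-1)^2$ with the sharper bound $P(t)\le\tfrac1{64}$, where the paper simply bounds $\bigl|\int_0^t\tau(\tau-\tfrac12)(\tau-1)\,d\tau\bigr|$ by $1$; both yield the same $\mathcal{O}\left(h^4\left(x_{\frac{i}{2}}-a\right)^{-\gamma}\right)$ estimate.
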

\begin{proof}
For any $y\in[x_{m},x_{m+1}]$, using  Taylor  expansion, there exist $\xi_{m}\in [x_{m},x_{m+1}]$ such that
$$u(y)-u_Q(y)=\frac{u^{(3)}(\xi_{m}) }{3!}(y-x_{m})\left(y-x_{m+\frac{1}{2}}\right)(y-x_{m+1}).  $$
For the sake of simplicity,  we taking $w(\xi_{m})=\frac{u^{(3)}(\xi_{m}) }{3!}$. Using integration by parts and
 $\int^{1}_{0}\tau\left(\tau-\frac{1}{2}\right)\left(\tau-1\right)d\tau=0$, we have
\begin{equation*}
\begin{split}
&  \left|\int^{x_{m+1}}_{x_{m}}\frac{(y-x_{m})\left(y-x_{m+\frac{1}{2}}\right)\left(y-x_{m+1}\right)}
       {\left(y-x_{\frac{i}{2}}\right)^{\gamma}}dy\right|
=  h^{4-\gamma}\left|\int^{1}_{0}\frac{t\left(t-\frac{1}{2}\right)(t-1)}{\left(t+m-\frac{i}{2}\right)^{\gamma}} dt\right| \\
&= \gamma h^{4-\gamma}\left|\int^{1}_{0}\frac{ \int^{t}_{0}\tau\left(\tau-\frac{1}{2}\right)(\tau-1)d\tau}
            {\left(t+m-\frac{i}{2}\right)^{1+\gamma}} dt\right|
\leq \gamma h^{4-\gamma}\int^{1}_{0}\frac{ 1}{\left(t+m-\frac{i}{2}\right)^{1+\gamma}} dt.
\end{split}
\end{equation*}
Moreover,
\begin{equation*}
\begin{split}
&\sum^{N-1}_{m=i}\int^{1}_{0}\frac{1}{\left(t+m-\frac{i}{2}\right)^{1+\gamma}}dt
=\frac{1}{\gamma}\sum^{N-1}_{m=i}\left[  \left(m-\frac{i}{2}\right)^{-\gamma}-\left(1+m-\frac{i}{2}\right)^{-\gamma} \right]\\
&=\frac{1}{\gamma} \left[\left(\frac{i}{2}\right)^{-\gamma}-\left(N-\frac{i}{2}\right)^{-\gamma} \right]\leq \frac{1}{\gamma} \left(\frac{i}{2}\right)^{-\gamma}
=\frac{1}{\gamma}h^{\gamma}\left(x_{\frac{i}{2}}-a\right)^{-\gamma}.
\end{split}
\end{equation*}
According to the above equations, we have
\begin{equation*}
\begin{split}
\left| Q_c \right|
=& \left|\sum^{N-1}_{m=i}w(\xi_m)\int^{x_{m+1}}_{x_{m}}\frac{ (y-x_{m})\left(y-x_{m+\frac{1}{2}}\right)\left(y-x_{m+1}\right)}{\left(y-x_{\frac{i}{2}}\right)^{\gamma}}dy\right|\\
\leq&\gamma h^{4-\gamma}\max_{\eta\in[a,b]}\left|w(\xi)\right|\sum^{N-1}_{m=i}\int^{1}_{0}\frac{1}{\left(t+m-\frac{i}{2}\right)^{1+\gamma}}dt\\
\leq& h^{4-\gamma}\max_{\eta\in[a,b]} \left|w(\xi)\right|  h^{\gamma}\left(x_{\frac{i}{2}}-a\right)^{-\gamma}
=\mathcal{O}\left(h^4\left(x_{\frac{i}{2}}-a\right)^{-\gamma}\right).
\end{split}
\end{equation*}
The proof is completed.
\end{proof}

\begin{lemma}\label{nlemma3.6}
Let $0<\gamma<1$, $ u(y) \in C^{4}[a,b] $ and $u_Q(y)$ be defined by \eqref{n2.2}. Then
\begin{equation*}
\begin{split}
 \widetilde{Q}_{c}:&=\int_{x_{0}}^{x_{i-N}} \frac{u(y)-u_Q(y)}{\left|x_{\frac{i}{2}}-y\right|^\gamma}dy
        =\mathcal{O}\left(h^4\left(x_{b-\frac{i}{2}}\right)^{-\gamma}\right),~~N+1\leq  i \leq  2N-1.
\end{split}
\end{equation*}
\end{lemma}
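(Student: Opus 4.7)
The plan is to prove Lemma \ref{nlemma3.6} by mirroring the argument of Lemma \ref{nlemma3.5}, exploiting the reflective symmetry of the setup: when $N+1 \le i \le 2N-1$, the collocation point $x_{i/2}$ lies in the right half of $[a,b]$, while the interval of integration $[x_0, x_{i-N}]$ lies entirely to its left and is separated from $x_{i/2}$ by a positive distance, just as $[x_i, x_N]$ was separated from $x_{i/2}$ to the right in Lemma \ref{nlemma3.5}. One clean option is to apply the substitution $y \mapsto a+b-y$ to reduce to Lemma \ref{nlemma3.5} verbatim (with $i$ replaced by $2N-i$); but since the direct calculation is short and matches the style of the preceding lemmas, I would just carry it out in place.

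First, on each subinterval $[x_m, x_{m+1}]$ with $0 \le m \le i-N-1$, invoke Taylor's theorem at the three Lagrange nodes to get
$$u(y)-u_Q(y)=\frac{u^{(3)}(\xi_m)}{3!}(y-x_m)\left(y-x_{m+\frac{1}{2}}\right)(y-x_{m+1}),$$
so that, writing $w(\xi_m)=u^{(3)}(\xi_m)/6$,
$$\widetilde{Q}_c=\sum_{m=0}^{i-N-1} w(\xi_m)\int_{x_m}^{x_{m+1}}\frac{(y-x_m)(y-x_{m+\frac{1}{2}})(y-x_{m+1})}{(x_{i/2}-y)^\gamma}\,dy.$$
Change variables $y=x_m+th$ and integrate by parts using the vanishing moment $\int_0^1 \tau(\tau-\tfrac12)(\tau-1)\,d\tau=0$, exactly as in \eqref{n3.1}, to obtain
$$\left|\int_{x_m}^{x_{m+1}}\frac{(y-x_m)(y-x_{m+\frac{1}{2}})(y-x_{m+1})}{(x_{i/2}-y)^\gamma}\,dy\right|\le \gamma\, h^{4-\gamma}\int_0^1\frac{dt}{\left(\frac{i}{2}-m-t\right)^{1+\gamma}},$$
which is legitimate because $m\le i-N-1<\tfrac{i}{2}-1$ when $i\le 2N-1$, so the kernel is regular on the subinterval.

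Next, evaluate the elementary integral
$$\int_0^1\frac{dt}{\left(\frac{i}{2}-m-t\right)^{1+\gamma}}=\frac{1}{\gamma}\left[\left(\tfrac{i}{2}-m-1\right)^{-\gamma}-\left(\tfrac{i}{2}-m\right)^{-\gamma}\right],$$
and telescope the sum:
$$\sum_{m=0}^{i-N-1}\int_0^1\frac{dt}{\left(\frac{i}{2}-m-t\right)^{1+\gamma}}=\frac{1}{\gamma}\left[\left(N-\tfrac{i}{2}\right)^{-\gamma}-\left(\tfrac{i}{2}\right)^{-\gamma}\right]\le \frac{1}{\gamma}\,h^{\gamma}(b-x_{i/2})^{-\gamma},$$
where I used $(N-i/2)h = b - x_{i/2}$. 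Assembling the pieces with $\|u^{(3)}\|_\infty$ absorbed into the implicit constant yields
$$|\widetilde{Q}_c|\le C\, h^{4-\gamma}\cdot h^{\gamma}(b-x_{i/2})^{-\gamma}=\mathcal{O}\!\left(h^4(b-x_{i/2})^{-\gamma}\right),$$
which is the required estimate (the symbol $x_{b-i/2}$ in the statement being read as the distance $b-x_{i/2}$, consistent with Lemma \ref{nlemma3.5}).

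I do not expect a serious obstacle: the only minor point requiring care is verifying that $m\le i-N-1$ forces $m<\tfrac{i}{2}-1$ for the admissible range $N+1\le i\le 2N-1$, so the kernel $(x_{i/2}-y)^{-\gamma}$ stays nonsingular on each $[x_m,x_{m+1}]$; the rest is a mechanical translation of the Lemma \ref{nlemma3.5} calculation, with $N-i/2$ playing the role that $i/2$ played there.
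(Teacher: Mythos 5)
Your proposal is correct and follows essentially the same route as the paper's proof: the same cubic Taylor remainder on each $[x_m,x_{m+1}]$, the same integration by parts using the vanishing moment of $\tau(\tau-\tfrac12)(\tau-1)$, and the same telescoping bound $\frac{1}{\gamma}\left[\left(N-\tfrac{i}{2}\right)^{-\gamma}-\left(\tfrac{i}{2}\right)^{-\gamma}\right]\le\frac{1}{\gamma}h^{\gamma}\left(b-x_{i/2}\right)^{-\gamma}$. Your reading of the statement's typo $x_{b-i/2}$ as $b-x_{i/2}$ is also consistent with the paper's own conclusion.
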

\begin{proof}
For any $y\in[x_{m},x_{m+1}]$, using  Taylor  expansion, there exist $\xi_{m}\in [x_{m},x_{m+1}]$ such that
$$u(y)-u_Q(y)=\frac{u^{(3)}(\xi_{m}) }{3!}(y-x_{m})\left(y-x_{m+\frac{1}{2}}\right)(y-x_{m+1}).  $$
For the sake of simplicity,  we taking $w(\xi_{m})=\frac{u^{(3)}(\xi_{m}) }{3!}$. Using integration by parts and
 $\int^{1}_{0}\tau\left(\tau-\frac{1}{2}\right)\left(\tau-1\right)d\tau=0$, we have
\begin{equation*}
\begin{split}
&  \left|\int^{x_{m+1}}_{x_{m}}\frac{(y-x_{m})\left(y-x_{m+\frac{1}{2}}\right)\left(y-x_{m+1}\right)}
       {\left(x_{\frac{i}{2}}-y\right)^{\gamma}}dy\right|
=  h^{4-\gamma}\left|\int^{1}_{0}\frac{t\left(t-\frac{1}{2}\right)(t-1)}{\left(\frac{i}{2}-m-t\right)^{\gamma}} dt\right| \\
&= \gamma h^{4-\gamma}\left|\int^{1}_{0}\frac{ \int^{t}_{0}\tau\left(\tau-\frac{1}{2}\right)(\tau-1)d\tau}
            {\left(\frac{i}{2}-m-t\right)^{1+\gamma}} dt\right|
\leq \gamma h^{4-\gamma}\int^{1}_{0}\frac{ 1}{\left(\frac{i}{2}-m-t\right)^{1+\gamma}} dt.
\end{split}
\end{equation*}
Moreover,
\begin{equation*}
\begin{split}
&\sum^{i-N-1}_{m=0}\int^{1}_{0}\frac{1}{\left(\frac{i}{2}-m-t\right)^{1+\gamma}}dt
=\frac{1}{\gamma}\sum^{i-N-1}_{m=0}\left[  \left(\frac{i}{2}-m-1\right)^{-\gamma}-\left(\frac{i}{2}-m\right)^{-\gamma} \right]\\
&=\frac{1}{\gamma} \left[\left(N-\frac{i}{2}\right)^{-\gamma}-\left(\frac{i}{2}\right)^{-\gamma} \right]\leq \frac{1}{\gamma} \left(N-\frac{i}{2}\right)^{-\gamma}
=\frac{1}{\gamma}h^{\gamma}\left(b-x_{\frac{i}{2}}\right)^{-\gamma}.
\end{split}
\end{equation*}
According to the above equations, we have
\begin{equation*}
\begin{split}
\left| \widetilde{Q}_c \right|
=& \left|\sum^{i-N-1}_{m=0}w(\xi_m)\int^{x_{m+1}}_{x_{m}}\frac{ (y-x_{m})\left(y-x_{m+\frac{1}{2}}\right)\left(y-x_{m+1}\right)}{\left(x_{\frac{i}{2}}-y\right)^{\gamma}}dy\right|\\
\leq&\gamma h^{4-\gamma}\max_{\eta\in[a,b]}\left|w(\xi)\right|\sum^{i-N-1}_{m=0}\int^{1}_{0}\frac{1}{\left(\frac{i}{2}-m-t\right)^{1+\gamma}}dt\\
\leq& h^{4-\gamma}\max_{\eta\in[a,b]} \left|w(\xi)\right|  h^{\gamma}\left(b-x_{\frac{i}{2}}\right)^{-\gamma}
=\mathcal{O}\left(h^4\left(b-x_{\frac{i}{2}}\right)^{-\gamma}\right).
\end{split}
\end{equation*}
The proof is completed.
\end{proof}

\subsection{Local truncation error for integral \eqref{n1.2} with PQC}
According to the above results,  we obtain  the following.
\begin{theorem}\label{nlemma3.7}
Let $I(a,b,x_{\frac{i}{2}})$ and $I_{2}(a,b,x_{\frac{i}{2}})$ be defined by (\ref{n1.2}) and (\ref{n2.4}), respectively.  Let $0<\gamma<1$, $ u(y) \in C^{4}[a,b] $ and $u_Q(y)$ be defined by \eqref{n2.2}. Then
\begin{equation*}
\left|I(a,b,x_{\frac{i}{2}}) - I_{2}(a,b,x_{\frac{i}{2}}) \right|=\int^{b}_{a} {\frac{u(y)-u_Q(y)}{\left|x_{\frac{i}{2}}-y\right|^\gamma} }dy
=\mathcal{O}\left(h^4\left(\eta_\frac{i}{2}\right)^{-\gamma}\right)+\mathcal{O}(h^{5-\gamma})
\end{equation*}
with $\eta_\frac{i}{2}=\min\left\{x_{\frac{i}{2}}-a,b-x_{\frac{i}{2}}\right\}, ~i=1,2,\cdots,2N-1$.
\end{theorem}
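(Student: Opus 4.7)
The plan is to decompose the difference into four contiguous pieces, match each against one of the four technical lemmas, and observe a critical cancellation between the two boundary pieces.

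First, for the primary case $1\le i\le N-1$, I partition $[a,b]$ into
\[
[x_0,x_{\lceil i/2\rceil-1}]\,\cup\,[x_{\lceil i/2\rceil-1},x_{\lfloor i/2\rfloor+1}]\,\cup\,[x_{\lfloor i/2\rfloor+1},x_i]\,\cup\,[x_i,x_N],
\]
which gives the decomposition $\int_a^b \tfrac{u(y)-u_Q(y)}{|x_{i/2}-y|^\gamma}\,dy = Q_l + Q_{i/2} + Q_r + Q_c$. The key geometric observation is that $[a,x_i]$ has length $2(x_{i/2}-a)$ and is exactly symmetric about the singular point $x_{i/2}$, so the outer piece $Q_c=\int_{x_i}^{x_N}$ carries all of the asymmetry-induced error.

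Next, I apply the lemmas. Lemma \ref{nlemma3.2} yields $Q_{i/2}=\mathcal{O}(h^{5-\gamma})$, and Lemma \ref{nlemma3.5} gives $Q_c=\mathcal{O}(h^4(x_{i/2}-a)^{-\gamma})=\mathcal{O}(h^4\eta_{i/2}^{-\gamma})$ since $\eta_{i/2}=x_{i/2}-a$ in this range. The crucial step is adding $Q_l$ and $Q_r$: the leading terms extracted in Lemmas \ref{nlemma3.3} and \ref{nlemma3.4} are
\[
\mp\,h^{4-\gamma}\,w(x_{i/2})\sum_{m=1}^{\lceil i/2\rceil-1}\int_0^1\frac{t(t-\tfrac12)(t-1)}{(\tfrac{i}{2}-m+t)^\gamma}\,dt,
\]
equal in magnitude but opposite in sign, so they cancel exactly. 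The surviving remainder is $\mathcal{O}(h^4)(x_{i/2}-a)^{1-\gamma}+\mathcal{O}(h^{5-\gamma})$, and since $1-\gamma>0$ forces $(x_{i/2}-a)^{1-\gamma}\le(b-a)^{1-\gamma}$, this is dominated by $\mathcal{O}(h^4\eta_{i/2}^{-\gamma})+\mathcal{O}(h^{5-\gamma})$. Collecting the four contributions gives the claimed bound for $1\le i\le N-1$.

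For $N+1\le i\le 2N-1$, I perform the mirror-image decomposition about $x_{i/2}$, with the symmetric core interval $[x_{i-N},b]$ replacing $[a,x_i]$, and use Lemma \ref{nlemma3.6} in place of Lemma \ref{nlemma3.5}; the cancellation argument carries over verbatim, and now $\eta_{i/2}=b-x_{i/2}$. For the middle case $i=N$, both tails vanish or are handled symmetrically and $\eta_{i/2}=(b-a)/2$ is maximal, so the estimate holds from either decomposition. The main conceptual point—and the hardest step to spot without the preparatory lemmas—is the exact cancellation of the leading $h^{4-\gamma}$ contributions from $Q_l$ and $Q_r$; once this is in hand, only the unavoidable $Q_c$-type tail contributes the $\eta_{i/2}^{-\gamma}$ factor, which is precisely what distinguishes this sharp estimate from the classical $\mathcal{O}(h^{4-\gamma})$ bound.
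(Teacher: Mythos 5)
Your proposal is correct and follows essentially the same route as the paper: the same four-piece decomposition $Q_l+Q_{\frac{i}{2}}+Q_r+Q_c$ estimated via Lemmas \ref{nlemma3.2}--\ref{nlemma3.5} (and the mirrored splitting with Lemma \ref{nlemma3.6} when $x_{\frac{i}{2}}$ lies in the right half), with the exact cancellation of the $h^{4-\gamma}$ leading terms of $Q_l$ and $Q_r$ that the paper leaves implicit in the opposite signs appearing in Lemmas \ref{nlemma3.3} and \ref{nlemma3.4}. Your explicit remark that the surviving $\mathcal{O}(h^4)\left(x_{\frac{i}{2}}-a\right)^{1-\gamma}$ term is dominated by $\mathcal{O}\left(h^4\eta_{\frac{i}{2}}^{-\gamma}\right)$ only makes precise a step the paper passes over silently.
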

\begin{proof}
If $x_{\frac{i}{2}}\leq \frac{b-a}{2}$, then
\begin{equation*}
\begin{split}
\int^b_a \frac{u(y)-u_Q(y)}{|x_{\frac{i}{2}}-y|^\gamma}dy=Q_l+Q_{\frac{i}{2}}+Q_r+Q_c
\end{split}
\end{equation*}
with
\begin{equation*}
\begin{split}
 Q_l:=&\int^{x_{\lceil\frac{i}{2}\rceil-1}}_{x_{0}}\frac{u(y)-u_Q(y)}{\left|x_{\frac{i}{2}}-y\right|^\gamma}dy,~~~~~~\qquad\,\,
Q_{\frac{i}{2}}:=\int_{x_{\lceil\frac{i}{2}\rceil-1}}^{x_{\lfloor\frac{i}{2}\rfloor+1}} \frac{u(y)-u_Q(y)} {\left|x_{\frac{i}{2}}-y\right|^\gamma}dy,\\
Q_r:=&\int^{x_{i}}_{x_{\lfloor\frac{i}{2}\rfloor+1}} \frac{u(y)-u_Q(y)}{\left|x_{\frac{i}{2}}-y\right|^\gamma}dy,~~~~~~\qquad \quad
 Q_c:=\int_{x_{i}}^{x_{N}} \frac{u(y)-u_Q(y)}{\left|x_{\frac{i}{2}}-y\right|^\gamma}dy.
\end{split}
\end{equation*}
According to Lemmas \ref{nlemma3.2}-\ref{nlemma3.5}, we obtain
\begin{equation*}
\left| R_{i} \right|=\int^{b}_{a} {\frac{u(y)-u_Q(y)}{\left|x_{\frac{i}{2}}-y\right|^\gamma} }dy
=\mathcal{O}\left(h^4\left(x_{\frac{i}{2}}-a\right)^{-\gamma}\right)+\mathcal{O}(h^{5-\gamma}).
\end{equation*}
If $x_{\frac{i}{2}}\geq \frac{b-a}{2}$,  then
\begin{equation*}
\begin{split}
\int^b_a \frac{u(y)-u_Q(y)}{|x_{\frac{i}{2}}-y|^\gamma}dy=\widetilde{Q}_c+\widetilde{Q}_l+\widetilde{Q}_{\frac{i}{2}}+\widetilde{Q}_r
\end{split}
\end{equation*}
with
\begin{equation*}
\begin{split}
 \widetilde{Q}_c:=&\int^{x_{i-N}}_{x_{0}}\frac{u(y)-u_Q(y)}{\left|x_{\frac{i}{2}}-y\right|^\gamma}dy,~~~~~~\qquad \,\,
\widetilde{Q}_l:= \int_{x_{i-N}}^{x_{\lceil\frac{i}{2}\rceil-1}} \frac{u(y)-u_Q(y)}{\left|x_{\frac{i}{2}}-y\right|^\gamma}dy,~~~~~~\\
\widetilde{Q}_{\frac{i}{2}}:=&\int_{x_{\lceil\frac{i}{2}\rceil-1}}^{x_{\lfloor\frac{i}{2}\rfloor+1}} \frac{u(y)-u_Q(y)} {\left|x_{\frac{i}{2}}-y\right|^\gamma}dy,~\quad~~~~\quad
 \widetilde{Q}_r:=\int_{x_{\lfloor\frac{i}{2}\rfloor+1}}^{x_{N}} \frac{u(y)-u_Q(y)}{\left|x_{\frac{i}{2}}-y\right|^\gamma}dy.
\end{split}
\end{equation*}
According to the  Lemma \ref{nlemma3.6} and  the similar arguments can be performed as Lemmas \ref{nlemma3.2}-\ref{nlemma3.4}, we have
\begin{equation*}
\left| R_{i} \right|=\int^{b}_{a} {\frac{u(y)-u_Q(y)}{\left|x_{\frac{i}{2}}-y\right|^\gamma} }dy
=\mathcal{O}\left(h^4\left(b-x_{\frac{i}{2}}\right)^{-\gamma}\right)+\mathcal{O}(h^{5-\gamma}).
\end{equation*}
The proof is completed.
\end{proof}
\begin{remark}\label{nremark3.1}
If $s$ is not  an element junction point, e.g., $s\in \left(x_{\frac{i}{2}},x_{\frac{i+1}{2}}\right)$,  the similar arguments can be performed as Theorem \ref{nlemma3.7} by PQC, we have
\begin{equation*}
\left|I(a,b,s) - I_{2}(a,b,s) \right|=\int^{b}_{a} {\frac{u(y)-u_Q(y)}{\left|s-y\right|^\gamma} }dy
=\mathcal{O}(h^{4-\gamma}),
\end{equation*}
which  coincides with \cite{Hoog:73} or \cite[p.\,525]{Aikinson:09}.
\end{remark}

\section{Global convergence rate  for nonlocal problems \eqref{n1.1}}
In \cite{TWW:13} remains to be proved the convergence error estimate by PLC. Inspired by this observations, we derive
an optimal global convergence estimate  for such nonlocal problems with
$\mathcal{O}\left(h\right)$ and  $\mathcal{O}\left(h^3\right)$ by PLC and PQC, respectively.
\subsection{Global convergence rate for model \eqref{n1.1} with PLC}
A symmetric positive definite matrix with positive entries on the diagonal and nonpositive off-diagonal entries is called an $M$-matrix.
Then we have the following.
\begin{lemma} \label{nlemma4.1}
Let matrix $A=D-G $ be defined by \eqref{n2.8}.
Then $A$ is an $M$-matrix.
\end{lemma}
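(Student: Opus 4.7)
The plan is to verify all three ingredients in the given definition of an $M$-matrix: symmetry, the correct sign pattern of the entries, and symmetric positive definiteness. Since $D$ is diagonal and $G$ is a symmetric Toeplitz matrix generated by $\{g_k\}$, the symmetry of $A=D-G$ is immediate, so the work lies in controlling signs and establishing positive definiteness.

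First I would show that the off-diagonal entries $A_{ij}=-g_{|i-j|}$ are (strictly) negative for $i\neq j$. Writing $f(x)=x^{2-\gamma}$, we have $g_k = f(k+1)-2f(k)+f(k-1)$ for $k\ge1$, which is a second divided difference of $f$. Since $f''(x)=(2-\gamma)(1-\gamma)x^{-\gamma}>0$ for $x>0$ and $0<\gamma<1$, $f$ is strictly convex, so $g_k>0$ for all $k\ge1$. Next, for the diagonal I would simply note $A_{ii}=d_i-g_0=(2-\gamma)[i^{1-\gamma}+(N-i)^{1-\gamma}]-2\ge 2(2-\gamma)-2=2(1-\gamma)>0$ since $i\ge1$ and $N-i\ge1$.

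The key step is establishing symmetric positive definiteness, which I would obtain by strict diagonal dominance. The crucial computation is a telescoping of $\sum_{k=1}^n g_k$: splitting the sums on shifted indices gives
\begin{equation*}
\sum_{k=1}^{n} g_k = (n+1)^{2-\gamma} - n^{2-\gamma} - 1.
\end{equation*}
Consequently, for each row $i$,
\begin{equation*}
\sum_{j\ne i} g_{|i-j|} = g_0 + \sum_{k=1}^{i-1} g_k + \sum_{k=1}^{N-1-i} g_k
= i^{2-\gamma}-(i-1)^{2-\gamma}+(N-i)^{2-\gamma}-(N-1-i)^{2-\gamma}.
\end{equation*}
By the mean value theorem applied to $f(x)=x^{2-\gamma}$, there exist $\xi_1\in(i-1,i)$ and $\xi_2\in(N-1-i,N-i)$ with
\begin{equation*}
i^{2-\gamma}-(i-1)^{2-\gamma}=(2-\gamma)\xi_1^{1-\gamma}<(2-\gamma)i^{1-\gamma},
\quad (N-i)^{2-\gamma}-(N-1-i)^{2-\gamma}<(2-\gamma)(N-i)^{1-\gamma},
\end{equation*}
so $\sum_{j\ne i} g_{|i-j|} < d_i$, i.e.\ $A_{ii}=d_i-g_0>\sum_{j\ne i}|A_{ij}|$. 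Thus $A$ is strictly diagonally dominant, and combined with positive diagonal and symmetry this yields positive definiteness (e.g.\ by Gershgorin's theorem).

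The only mildly delicate step is the telescoping and the resulting dominance comparison; neither is hard, but both rely on the fact that $2-\gamma\in(1,2)$, so the exponent yields a strictly convex $f$ with the correct monotonicity for the MVT bound. All other points (symmetry, positivity of the diagonal, the sign of $g_k$) are direct consequences of the explicit formulas in \eqref{n2.1} and of convexity.
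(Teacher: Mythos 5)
Your proof is correct, and its overall skeleton (sign pattern, then strict diagonal dominance, then Gershgorin/positive definiteness) is the same as the paper's, but both key verifications are carried out by different means. For the signs, the paper just invokes a Taylor (binomial-series) expansion of the coefficients, whereas you note that $g_k$ is a second difference of the strictly convex function $x\mapsto x^{2-\gamma}$, which is cleaner and equally rigorous (including $k=1$, where the difference involves $f(0)$). For the diagonal dominance, the paper does not telescope the $g_k$ at all: it uses the partition of unity $1\equiv\sum_{j=0}^N\phi_j(y)$ to identify $\sigma_{h,\gamma}$ times the $i$-th row sum of $A$ with $\int_a^b \phi_0(y)\,|x_i-y|^{-\gamma}dy+\int_a^b \phi_N(y)\,|x_i-y|^{-\gamma}dy$, and then bounds this quantity below by $\tfrac{(2-\gamma)(1-\gamma)}{2}\bigl(i^{-\gamma}+(N-i)^{-\gamma}\bigr)\sigma_{h,\gamma}$ via a series expansion; that quantitative lower bound is precisely what gets reused in the proof of Theorem \ref{ntheorem4.2}. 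Your route --- the telescoping identity $\sum_{k=1}^{n}g_k=(n+1)^{2-\gamma}-n^{2-\gamma}-1$ followed by the mean value theorem against $d_i$ --- is more elementary and purely algebraic, and it fully suffices for the lemma, though it only yields positivity of the row sums rather than the explicit lower bound the paper extracts for later use. One cosmetic slip: the quantity you label $\sum_{j\ne i}g_{|i-j|}$ in fact includes $g_0$ (it equals $g_0+\sum_{k=1}^{i-1}g_k+\sum_{k=1}^{N-1-i}g_k$); since you then compare it with $d_i$ and correctly conclude $a_{i,i}=d_i-g_0>\sum_{j\ne i}|a_{i,j}|$, the argument is consistent, but the notation should be fixed.
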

\begin{proof}
Let $A=\left\{ a_{i,j} \right\}_{i,j=1}^{N-1}$ with $N\geq 2$.
From \eqref{n2.8} and \eqref{n2.1} and Taylor  expansion, we have
\begin{equation*}
a_{i,j}=\left\{\begin{split}
& d_{i}-g_0 >0, & i = j, \\
& -g_{|i-j|} <0,      & i \neq j.
\end{split}
\right.
\end{equation*}

We next prove the matrix $A$ is  strictly  diagonally dominant by rows.
Using
$1\equiv\sum_{j=0}^{N}\phi_j(x),$
it yields
\begin{equation}\label{4.01}
\begin{split}
&\int^{b}_{a} \frac{ 1}{\left| x_{i} - y \right|^{\gamma}} dy -
 \int^{b}_{a} \frac{ \sum^{N-1}_{j=1}  \phi_{j}(y)}{\left| x_{i} - y \right|^{\gamma}} dy
=  \int^{b}_{a} \frac{\phi_{0}(x) }{\left| x_{i} - y \right|^{\gamma}} dy +
    \int^{b}_{a} \frac{\phi_{N}(x) }{\left| x_i - y \right|^{\gamma}} dy\\
&=\sigma_{h,\gamma}\rho_i \geq \frac{(2-\gamma)(1-\gamma)}{2}\sigma_{h,\gamma}\left[\frac{1}{i^{\gamma}}  +\frac{1}{\left(N-i\right)^{\gamma}}\right]
=\frac{h^{1-\gamma}}{2}\left[\frac{1}{i^{\gamma}}  +\frac{1}{\left(N-i\right)^{\gamma}}\right]
\end{split}
\end{equation}
with
\begin{equation*}
\begin{split}
\rho_i=& \left[ (i-1)^{2-\gamma} - i^{2-\gamma} + (2-\gamma)i^{1-\gamma} \right. \\
 &\qquad +  \left.(N-i-1)^{2-\gamma} - (N-i)^{2-\gamma} + (2-\gamma)(N-i)^{1-\gamma}\right].
\end{split}
\end{equation*}
From 
\begin{equation*}
\begin{split}
&(i-1)^{2-\gamma} - i^{2-\gamma} + (2-\gamma)i^{1-\gamma}
  = i^{2-\gamma}\left[\left(1-\frac{1}{i}\right)^{2-\gamma}-1+(2-\gamma)\frac{1}{i}\right]\\
& = i^{2-\gamma}\left[\frac{(2-\gamma)(1-\gamma)}{2!}\frac{1}{i^{2}}+(2-\gamma)(1-\gamma)\sum_{n=1}^\infty\prod_{k=1}^n\frac{k+\gamma-1}{(n+2)!}\frac{1}{i^{n+2}}\right]\\
&\ge i^{2-\gamma}\left[\frac{(2-\gamma)(1-\gamma)}{2!}\frac{1}{i^{2}}\right]= \frac{(2-\gamma)(1-\gamma)}{2i^{\gamma}}>0;
\end{split}
\end{equation*}
and
\begin{equation*}
\begin{split}
(N-i-1)^{2-\gamma} - (N-i)^{2-\gamma} + (2-\gamma)(N-i)^{1-\gamma} \ge \frac{(2-\gamma)(1-\gamma)}{2\left(N-i\right)^{\gamma}}>0,
\end{split}
\end{equation*}
thus we have 
\begin{equation*}
\begin{split}
\sum^{N-1}_{j=1}a_{i,j} =\rho_i >0,~~i=1,2\ldots N-1.
\end{split}
\end{equation*}
From the Gerschgorin circle theorem \cite[p.\,388]{Horn:13}, the eigenvalues of  $A$ are in the disks centered at $a_{i,i}$ with radius
$r_i$, i.e.,  the eigenvalues $\lambda$ of the matrix  $A$ satisfy
$$  |\lambda -a_{i,i} | \leq r_i=\sum^{N-1}_{j=1,j\neq i}|a_{i,j}|, $$
which yields
\begin{equation}\label{n4.1}
\begin{split}
&  \lambda_{\min}(A) \geq \min\{a_{i,i}-r_i\}=\min\rho_i\\
&  = \min\left\{\frac{(2-\gamma)(1-\gamma)}{2i^{\gamma}}  + \frac{(2-\gamma)(1-\gamma)}{2\left(N-i\right)^{\gamma}}\right\},~~i=1,2\ldots N-1.
\end{split}
\end{equation}
The proof is completed.
\end{proof}
\begin{theorem}\label{ntheorem4.2}
Let $u_{i}$ be the approximate solution of $u(x_{i})$ computed by the discretization scheme (\ref{n2.8}). Let $\varepsilon_{i}=u(x_{i})-u_{i}$. Then
$$||u(x_{i})-u_{i}||_\infty  = \mathcal{O}(h).$$
\end{theorem}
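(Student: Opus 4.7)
The plan is to turn the consistency bound of Lemma \ref{nlemma3.1} into a global $L^\infty$ estimate by combining it with a \emph{sharp} stability bound $\|A^{-1}\|_\infty\le CN^\gamma$ for $A=D-G$; this sharp bound is the whole heart of the argument and is obtained from a discrete barrier argument built on the $M$-matrix property of Lemma \ref{nlemma4.1}, not from the spectral estimate \eqref{n4.1}.

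First I would derive the error equation. Subtracting the discrete scheme \eqref{n2.7} from the consistency identity \eqref{n2.6} and using that the boundary conditions are imposed exactly ($u_0=u_a,\,u_N=u_b$), one obtains
\begin{equation*}
\sigma_{h,\gamma}\,A\,\vec{\varepsilon}=\vec{R},\qquad \varepsilon_i=u(x_i)-u_i,\quad R_i=I(a,b,x_i)-I_1(a,b,x_i),
\end{equation*}
and Lemma \ref{nlemma3.1} gives $\|\vec{R}\|_\infty=\mathcal{O}(h^2)$ uniformly in $i$.

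Next, for the stability I would use that the $M$-matrix property from Lemma \ref{nlemma4.1} yields $A^{-1}\ge 0$ componentwise, whence $\|A^{-1}\|_\infty=\|A^{-1}\vec{1}\|_\infty$. The key identity is that plugging the constant $u\equiv 1$ into the exact interpolation relation $\sum_{j=0}^{N}\phi_j\equiv 1$ reproduces the row-sum formula $d_i=\sum_{j=1}^{N-1}g_{|i-j|}+\alpha_i+\alpha_{N-i}$, so that $(A\vec{1})_i=\alpha_i+\alpha_{N-i}$. The lower bound already established inside the proof of Lemma \ref{nlemma4.1} (cf.\ \eqref{4.01}) then gives
\begin{equation*}
(A\vec{1})_i\ge \tfrac{(2-\gamma)(1-\gamma)}{2}\bigl[i^{-\gamma}+(N-i)^{-\gamma}\bigr]\ge c_0\,N^{-\gamma}
\end{equation*}
uniformly in $i=1,\dots,N-1$, for some $c_0=c_0(\gamma)>0$. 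Taking the constant barrier $\vec{v}=c_0^{-1}N^{\gamma}\vec{1}$ gives $A\vec{v}\ge\vec{1}$, and since $A^{-1}\ge 0$ one concludes $A^{-1}\vec{1}\le \vec{v}$ componentwise, i.e.\ $\|A^{-1}\|_\infty\le c_0^{-1}N^\gamma$.

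Finally, assembling the pieces with $\sigma_{h,\gamma}^{-1}=\mathcal{O}(h^{\gamma-1})$ yields
\begin{equation*}
\|\vec{\varepsilon}\|_\infty\le \sigma_{h,\gamma}^{-1}\,\|A^{-1}\|_\infty\,\|\vec{R}\|_\infty
=\mathcal{O}(h^{\gamma-1})\cdot\mathcal{O}(N^\gamma)\cdot\mathcal{O}(h^2)=\mathcal{O}(h),
\end{equation*}
as claimed. The main obstacle is precisely the stability step: the naive symmetric-matrix estimate $\|A^{-1}\|_\infty\le\sqrt{N-1}\,\|A^{-1}\|_2\le \sqrt{N-1}/\lambda_{\min}(A)$ together with \eqref{n4.1} gives only $\|A^{-1}\|_\infty=\mathcal{O}(N^{\gamma+1/2})$ and hence the degraded rate $\mathcal{O}(h^{1/2})$. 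Removing this spurious $\sqrt{N}$ factor is what the $M$-matrix barrier argument accomplishes; the residual $N^\gamma$ growth reflects quantitatively the fact, highlighted in the introduction, that the inverse operator of \eqref{n1.1} is unbounded (in contrast to the Fredholm equation $(*)$ where it is uniformly bounded).
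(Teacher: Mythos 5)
Your proposal is correct and is essentially the paper's own argument: the paper extracts the same stability constant from the row sums of $A$ (which equal the boundary basis-function contributions and are bounded below by $\tfrac{h^{1-\gamma}}{2}\bigl[i^{-\gamma}+(N-i)^{-\gamma}\bigr]\ge h^{1-\gamma}N^{-\gamma}$ via \eqref{4.01}), only it argues directly at the maximal-error component $i_0$ using the sign pattern of Lemma \ref{nlemma4.1} rather than phrasing the step as $\|A^{-1}\|_\infty\le c_0^{-1}N^{\gamma}$ through $A^{-1}\ge 0$ and a constant barrier. Combined with the $\mathcal{O}(h^2)$ truncation error of Lemma \ref{nlemma3.1}, both packagings yield the identical $\mathcal{O}(h)$ conclusion.
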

\begin{proof}
Let $\varepsilon_{i}=u(x_{i})-u_{i}$ with $\varepsilon_{0}=\varepsilon_{N}=0$.
Subtracting (\ref{n2.8}) from (\ref{n2.6}),  we get
$$ \frac{h^{1-\gamma}}{(2-\gamma)(1-\gamma)} \cdot A\varepsilon=R$$
where
$\varepsilon=[\varepsilon_1,\varepsilon_2,\cdots,\varepsilon_{N-1}]^T$
and similarly for  $R$ with $R_{i}=\mathcal{O}(h^2)$ in Lemma \ref{nlemma3.1}.

Let $\left|\varepsilon_{i_0}\right|:=||\varepsilon||_{\infty}=\max_{1\leq i\leq N-1}|\varepsilon_i|$ and $A=\left\{ a_{i,j} \right\}_{i,j=1}^{N-1}$.
From Lemma \ref{nlemma4.1}, it yields $a_{i,i}>0$ and $a_{i,j}<0$, $i\neq j$ and
\begin{equation*}
\begin{split}
\left|R_{i_0}\right|
=&      \sigma_{h,\gamma} \left|a_{i_{0},i_{0}}\varepsilon_{i_0}+\sum^{N-1}_{j=1,j\neq i_{0}}a_{i_{0},j}\varepsilon_{j}\right|
\ge \sigma_{h,\gamma}\left[ a_{i_{0},i_{0}}\left|\varepsilon_{i_{0}}\right|-\sum^{N-1}_{j=1,j\neq i_{0}}\left|a_{i_{0},j}\right|
         \left|\varepsilon_{j}\right| \right]\\
\ge&  \sigma_{h,\gamma}\left[ a_{i_{0},i_{0}}\left|\varepsilon_{i_{0}}\right|-\sum^{N-1}_{j=1,j\neq i_{0}}\left|a_{i_{0},j}\right|
         \left|\varepsilon_{i_{0}}\right|\right]
=   \sigma_{h,\gamma} \left[a_{i_{0},i_{0}}-\!\!\!\sum^{N-1}_{j=1,j\neq i_{0}}\left|a_{i_{0},j}\right|\right]\left|\varepsilon_{i_{0}}\right|\\
= & \left[\int^{b}_{a} \frac{\phi_{0}(x) }{\left| x_{i_0} - y \right|^{\gamma}} dy +
    \int^{b}_{a} \frac{\phi_{N}(x) }{\left| x_{i_0} - y \right|^{\gamma}} dy\right]\left|\varepsilon_{i_0}\right|
    =S_{i_0}\left|\varepsilon_{i_0}\right|.
\end{split}
\end{equation*}
From \eqref{4.01} and Lemma \ref{nlemma3.1}, we have
$$S_{i_0}\geq \frac{h^{1-\gamma}}{2}\left[\frac{1}{i_0^{\gamma}}  +\frac{1}{\left(N-i_0\right)^{\gamma}}\right]\geq h^{1-\gamma}N^{-\gamma}=\frac{h}{\left(b-a\right)^\gamma}
~~{\rm and}~~R_{i_0}=\mathcal{O}\left(h^2\right).$$
Then
\begin{equation*}
\begin{split}
 ||\varepsilon||_{\infty}
 =\left|\varepsilon_{i_0}\right| \le \frac{\left|R_{i_0}\right|}{S_{i_0}}= \mathcal{O}\left(h\right).
\end{split}
\end{equation*}
The proof is completed.
\end{proof}

\begin{remark}\label{nremark4.1}
Fredholm integral equations of the second kind model ($*$)
holds $ ||u(x_{i})-u_{i}||_{\infty}= \mathcal{O}(h^2)$ by PLC, see \cite{Aikinson:67} and \cite[p.\,522]{Aikinson:09}.
If $\lambda=1$ and $\gamma=0$, the model ($*$) is equivalent to the   nonlocal model  \eqref{n1.1}  with $\gamma=0$, i.e.,
$
 \int^b_a u(x)-u(y)dy  =f(x).
$
From Theorem \ref{ntheorem4.2}, it leads to the  interesting results $ ||u(x_{i})-u_{i}||_{\infty}= \mathcal{O}(h).$

\end{remark}

\subsection{Global convergence rate  for model \eqref{n1.1} with PQC}
We next consider the properties of the stiffness matrix $\mathcal{A}$ in \eqref{n2.11}.
\begin{lemma}\label{nlemma4.3}
Let the matrices $\mathcal{M}$,  $\mathcal{N}$, $\mathcal{P}$, $\mathcal{Q}$ be defined by \eqref{n2.11}.
Then $ \mathcal{M}$, $ \mathcal{N}$,  $\mathcal{P}$,  $\mathcal{Q}$ are positive matrices.
\end{lemma}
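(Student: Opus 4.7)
The plan is to prove that every entry of each of the four matrices is strictly positive. Since $\mathcal{M}$ and $\mathcal{N}$ are Toeplitz and $\mathcal{P}, \mathcal{Q}$ are constant along their generalized diagonals, this reduces to showing $m_k, n_k, p_k, q_k > 0$ for all admissible indices $k$. The natural tool is the integral representation from Section~\ref{subsection2.1}: for instance, $m_k\,\eta_{h,\gamma} = \int_{x_{j-1}}^{x_{j+1}} \varphi_j(y)\,|x_i-y|^{-\gamma}\,dy$ with $|i-j|=k$, and entirely analogous expressions (with positive prefactors) for $n_k$, $p_k$, $q_k$. The sign of each coefficient is then controlled by the sign of the basis function on its support and, when that basis changes sign, by how its oscillations pair against the weight $|x_i-y|^{-\gamma}$.

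I would dispose of the easy half first. The midpoint basis $\varphi_{j+\frac12}$ is nonnegative on its support $[x_j, x_{j+1}]$, so $n_k$ and $q_k$ are integrals of strictly positive integrands for every admissible $k$, and hence positive. The diagonal entries $m_0 = 2(1+\gamma)$ and $n_0 = (2-\gamma)\,2^{\gamma+1}$ are manifestly positive; and $p_0$ follows from a short calculus argument on its closed form, with boundary values $p_0(0) = 4 > 0$ and $p_0(1) = 2 > 0$ together with a monotonicity or convexity check in $\gamma$.

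The real work is proving $m_k > 0$ for $k \ge 1$; this will also dispatch $\mathcal{P}$ since $p_k = m_{k+1/2}$ for $k \ge 1$. Here the endpoint basis $\varphi_j$ changes sign on its support, so a more delicate argument is needed. Under the affine scaling $y = x_j + sh$ the rescaled basis factors cleanly as
\[
\tilde\varphi(s) = 2s^2 - 3|s| + 1 = (1-|s|)(1-2|s|), \qquad s \in [-1,1],
\]
and the coefficient becomes $m_k = (3-\gamma)(2-\gamma)(1-\gamma)\int_{-1}^1 \tilde\varphi(s)(k-s)^{-\gamma}\,ds$ (say, when $i>j$). Using evenness of $\tilde\varphi$ I would fold the integral onto $[0,1]$ and then apply the reflection $s \mapsto 1-s$ to the subinterval $(1/2,1)$, collapsing it to
\[
\int_0^{1/2} (1-2s)\bigl[(1-s)G_k(s) - s\,G_k(1-s)\bigr]\,ds, \qquad G_k(s) := (k-s)^{-\gamma} + (k+s)^{-\gamma}.
\]
The factor $(1-2s)$ is strictly positive on $(0,1/2)$, and the bracket vanishes at $s=1/2$ while equaling $G_k(0) = 2k^{-\gamma} > 0$ at $s=0$; positivity of $m_k$ thus reduces to the pointwise inequality $(1-s)G_k(s) > s\,G_k(1-s)$ on $(0,1/2)$.

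The principal obstacle is establishing this bracket inequality uniformly in $k \ge 1$ and $\gamma \in (0,1)$. Since $G_k$ is increasing on $[0,1)$ and convex (because $G_k''(s) = \gamma(\gamma+1)[(k-s)^{-\gamma-2}+(k+s)^{-\gamma-2}] > 0$), the growth of $G_k$ on the right-hand side must be balanced against the decay of the prefactor $1-s$ on the left. I would pursue this by differentiating the bracket once and using the explicit form $G_k'(s) = \gamma[(k-s)^{-\gamma-1} - (k+s)^{-\gamma-1}]$, or alternatively by rewriting the bracket as an integral of a nonnegative kernel against $G_k'$. The case $k=1$, where the singularity of $(k-s)^{-\gamma}$ sits at the right endpoint of the integration window, requires the most care; it can be handled directly from the closed form $m_1 = 2^{2-\gamma}(5+\gamma) - 6(3-\gamma)$, verifying $m_1 > 0$ on $(0,1)$ by the boundary values $m_1(0) = 2$, $m_1(1) = 0$ together with a monotonicity check in $\gamma$.
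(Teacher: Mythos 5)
Your route is genuinely different from the paper's. The paper never uses the integral representation of the weights: it takes the closed-form coefficients $m_i,p_i,q_i,n_i$ from Section~2, expands factors such as $(1\pm 1/i)^{3-\gamma}$ by the binomial series \eqref{n4.2}, and bounds the series tails to obtain explicit positive lower bounds of the form $c(3-\gamma)(2-\gamma)(1-\gamma)i^{-\gamma}$ for each family. Your observation that $n_k$ and $q_k$ are integrals of the nonnegative midpoint basis $\varphi_{j+\frac12}$ against the positive kernel disposes of $\mathcal{N}$ and $\mathcal{Q}$ in one line, which is cleaner than the paper's series manipulations; and your symmetrization of the sign-changing endpoint basis, folding $\int_{-1}^{1}(1-|s|)(1-2|s|)(k-s)^{-\gamma}ds$ onto $[0,\frac12]$ to get the factor $(1-2s)$ times the bracket $(1-s)G_k(s)-sG_k(1-s)$, is a legitimate alternative to the paper's expansion of $m_k$. (Only positivity, not the paper's quantitative lower bounds, is used later, so a qualitative argument of your type suffices for the lemma. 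One small caution: the representation ``$p_k\eta_{h,\gamma}=\int\varphi_j|x_{i+\frac12}-y|^{-\gamma}dy$'' must be restricted to $k\ge1$; for $p_0$ the collocation point lies inside the support of $\varphi_j$, and you rightly fall back on the closed form there.)

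The weak point is that the decisive inequality $(1-s)G_k(s)>sG_k(1-s)$ on $(0,\tfrac12)$ --- which is exactly where the sign change of $\varphi_j$ is controlled, i.e.\ the heart of the positivity of $m_k$ and $p_k$ --- is only announced, with two candidate strategies, not proved; as submitted the hardest step of the lemma is missing, and the endpoint checks for $m_1$, $p_0$ are likewise asserted (``a monotonicity check'') rather than carried out. The gap is closable along the line you indicate: set $\Phi(s)=(1-s)G_k(s)-sG_k(1-s)$, note $\Phi(\tfrac12)=0$, and compute
\begin{equation*}
\Phi'(s)=-G_k(s)-G_k(1-s)+(1-s)G_k'(s)+sG_k'(1-s).
\end{equation*}
Since $G_k'(s)=\gamma\left[(k-s)^{-\gamma-1}-(k+s)^{-\gamma-1}\right]\le\gamma(k-s)^{-\gamma-1}$ and $\gamma(1-s)\le k-s$ (because $\gamma<1\le k$), one gets $(1-s)G_k'(s)\le(k-s)^{-\gamma}\le G_k(s)$; similarly $\gamma s\le k-1+s$ gives $sG_k'(1-s)\le(k-1+s)^{-\gamma}\le G_k(1-s)$. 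Hence $\Phi'\le0$ on $(0,\tfrac12)$, so $\Phi\ge\Phi(\tfrac12)=0$ there, and since $\Phi(0)=2k^{-\gamma}>0$ the folded integral is strictly positive. This argument works verbatim for every real shift $\ge1$, hence also for $p_k=m_{k+\frac12}$, $k\ge1$, and even for $k=1$ (on $[0,\tfrac12]$ the singular factor is $s^{-\gamma}$, harmless after multiplication by $s$), so the separate closed-form verifications reduce to the genuinely exceptional entries $m_0$, $n_0$, $p_0$. With that supplement your proof is complete; without it, it is a plan rather than a proof at the one place where the work is.
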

\begin{proof}
Using  Taylor  expansion, we have
\begin{equation}\label{n4.2}
\begin{split}
  (1+z)^\alpha
 & =  1+\alpha z + \frac{\alpha(\alpha-1)}{2!}z^{2}+\frac{\alpha(\alpha-1)(\alpha-2)}{3!}z^{3}+\cdots\\
 & =  1+ \sum^{\infty}_{n=1} \prod^{n}_{ k=1} \frac{\alpha+1-k}{n!} z^{n},~~| z|\leq 1,~~\alpha>0.
\end{split}
\end{equation}
We first  estimate the elements of $\mathcal{M}$. From  \eqref{n2.4} and $\eqref{n4.2}$, it yields  $m_0>0$ and
\begin{equation*}
\begin{split}
  m_{i}
  =  & 4 i^{3-\gamma}\left[\left(1+\frac{1}{i}\right)^{3-\gamma}-\left(1-\frac{1}{i}\right)^{3-\gamma}\right]\\
     &\quad -(3-\gamma)i^{2-\gamma}\left[\left(1+\frac{1}{i}\right)^{2-\gamma}+6+\left(1-\frac{1}{i}\right)^{2-\gamma}\right]\\
  =  & 2 \sum^{\infty}_{n=1} \prod^{2n+1}_{k=1} (4-k-\gamma)\frac{3-2n}{(2n+1)!} i^{2-2n-\gamma}\\
  =  & 2(3-\gamma)(2-\gamma)(1-\gamma)i^{-\gamma}\left[ \frac{1}{6}
       - \sum^{\infty}_{n=1} \prod^{2n}_{k=1} (1-k-\gamma)\frac{2n-1}{(2n+3)!} i^{-2n} \right]\\
  \ge&  2(3-\gamma)(2-\gamma)(1-\gamma)i^{-\gamma} \left(\frac{1}{6} -\frac{7}{60} \right)> 0,\quad i\geq1,
\end{split}
\end{equation*}
since
\begin{equation*}
\begin{split}
&  \sum^{\infty}_{n=1} \prod^{2n}_{k=1} (1-k-\gamma)\frac{2n-1}{(2n+3)!} i^{-2n}\\
&  \quad\le  \sum^{\infty}_{n=1} \frac{ \left(2n\right)!  (2n-1)}{(2n+3)!}  = \frac{1}{60} +\sum^{\infty}_{n=2} \frac{  (2n-1)}{(2n+3)(2n+2)(2n+1)}\\
&  \quad\le   \frac{1}{60} + \sum^{\infty}_{n=2} \frac{ 1}{(2n+1)(2n+3)} \le \frac{1}{60}+\frac{1}{10}=\frac{7}{60}.
\end{split}
\end{equation*}
Now we   estimate the elements of $\mathcal{P}$.  From  \eqref{n2.5} and $\eqref{n4.2}$ and the above estimate of $m_i$, we have
\begin{equation*}
\begin{split}
p_{i}
= & 4z^{3-\gamma}\left[ \left( 1+\frac{1}{z} \right)^{3-\gamma} - \left( 1-\frac{1}{z} \right)^{3-\gamma} \right]\\
  &\quad  -(3-\gamma)z^{2-\gamma}\left[ \left( 1+\frac{1}{z} \right)^{2-\gamma}+6+\left( 1-\frac{1}{z} \right)^{2-\gamma} \right] \\
\ge & \frac{1}{10}(3-\gamma)(2-\gamma)(1-\gamma)z^{-\gamma} > 0~~\quad~~{\rm with}~~\quad~~z=i+\frac{1}{2},~~i\geq 1.
\end{split}
\end{equation*}
On the other hand, using   \eqref{n2.5} and $\eqref{n4.2}$, we obtain
\begin{equation*}
\begin{split}
  p_0
\geq &4\left[\left(\frac{3}{2}\right)^{3-\gamma}-\left(\frac{1}{2}\right)^{3-\gamma}\right]\\
&\quad-(3-\gamma)\left[\left(\frac{3}{2}\right)^{2-\gamma}
     +3\left(\frac{1}{2}\right)^{2-\gamma}\right]-(3-\gamma)(2-\gamma)\left(\frac{1}{2}\right)^{1-\gamma}  \\
 = &  (3-\gamma)(2-\gamma)(1-\gamma)z^{-\gamma}\left[\frac{1}{6}
       + \sum^{\infty}_{n=1}\prod^{n}_{k=1}(k-1+\gamma)\right]\frac{n^2+2n+1}{(n+3)!} z^{-n} \\
 \ge &  \frac{1}{6}(3-\gamma)(2-\gamma)(1-\gamma)z^{-\gamma}  > 0~~\quad~~{\rm with}~~\quad~~z=\frac{3}{2}.
\end{split}
\end{equation*}
We next  estimate the elements of $\mathcal{Q}$. From  \eqref{n2.4} and $\eqref{n4.2}$, we obtain
\begin{equation*}
\begin{split}
  q_{i}
  = & -8(i+1)^{3-\gamma}\left[1-\left(1-\frac{1}{i+1}\right)^{3-\gamma}\right]\\
    &  +4(3-\gamma)(i+1)^{2-\gamma}\left[1+\left(1-\frac{1}{i+1}\right)^{2-\gamma}\right] \\
  = & 4(3-\gamma)(2-\gamma)(1-\gamma)(i+1)^{-\gamma}
     \left[\frac{1}{6}+  \sum^{\infty}_{n=1}\prod^{n}_{k=1}(k-1+\gamma) \frac{n+1}{(n+3)!}  (i+1)^{-n}\right] \\
  > & \frac{2}{3}(3-\gamma)(2-\gamma)(1-\gamma)(i+1)^{-\gamma} > 0, \quad i\geq 0.
\end{split}
\end{equation*}
We last  estimate the elements of $\mathcal{N}$. From  \eqref{n2.5} and $\eqref{n4.2}$, it yields $n_0>0$  and
\begin{equation*}
\begin{split}
  n_{i}
  = & -8\left(z+1\right)^{3-\gamma}\left[1-\left(1-\frac{1}{z+1}\right)^{3-\gamma}\right]\\
    & +4(3-\gamma)\left(z+1\right)^{2-\gamma}\left[1+\left(1-\frac{1}{z+1}\right)^{2-\gamma}\right]\\
  \ge & \frac{2}{3}(3-\gamma)(2-\gamma)(1-\gamma)\left(z+1\right)^{-\gamma} > 0 ~~\quad~~{\rm with}~~\quad~~z=i-\frac{1}{2},~~i\geq 1.
\end{split}
\end{equation*}
The proof is completed.
\end{proof}

\begin{lemma}\label{nlemma4.4}
Let $0<\gamma<1$ and $1\le i \le 2N-1$. Then
$$\int^{b}_{a} \frac{\varphi_{0}(x) }{|x_{\frac{i}{2}}-y|^{\gamma}} dy\geq\frac{1}{6} (1-\gamma)h\left(x_{\frac{i}{2}} -a\right)^{-\gamma};$$
and
$$\int^{b}_{a} \frac{\varphi_{N}(x) }{|x_{\frac{i}{2}}-y|^{\gamma}} dy\geq\frac{1}{6} (1-\gamma)h\left(b-x_{\frac{i}{2}} \right)^{-\gamma}.$$
\end{lemma}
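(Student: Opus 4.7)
The plan is to apply the affine substitution $y=x_0+th$ on the support $[x_0,x_1]$ of $\varphi_0$, under which $\varphi_0(y)=g(t):=2(t-\tfrac12)(t-1)$, and to set $r:=i/2$ so that $|x_{i/2}-y|=|r-t|h$ and $x_{i/2}-a=rh$. The first inequality then reduces to showing
\begin{equation*}
J(r,\gamma)\;:=\;\int_0^1\frac{g(t)}{|r-t|^\gamma}\,dt\;\ge\;\frac{1-\gamma}{6\,r^\gamma}\qquad\text{for }r\in\{\tfrac12,1,\tfrac32,\dots,N-\tfrac12\}.
\end{equation*}
I would split into two cases according to whether the singularity $t=r$ is interior to $[0,1]$.

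For $i=1$ (so $r=1/2$) the singularity lies in the interior. The substitution $s=t-1/2$ turns the integrand into $(2s^2-s)|s|^{-\gamma}$; its odd part vanishes by symmetry, yielding the closed form $J(1/2,\gamma)=2^{\gamma-1}/(3-\gamma)$. The required inequality then reduces to the algebraic estimate $(3-\gamma)(1-\gamma)\le 3$, equivalent to $\gamma(4-\gamma)\ge 0$, which is trivial on $(0,1)$.

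For $i\ge 2$ (so $r\ge 1$) the kernel is smooth on the support since $|r-t|=r-t$ throughout $[0,1]$. I would factor $(r-t)^{-\gamma}=r^{-\gamma}(1-t/r)^{-\gamma}$ and expand via the binomial series $(1-t/r)^{-\gamma}=\sum_{n\ge 0}\frac{(\gamma)_n}{n!}(t/r)^n$ with $(\gamma)_n:=\gamma(\gamma+1)\cdots(\gamma+n-1)$. An elementary integration yields $\int_0^1 t^n g(t)\,dt=(1-n)/[(n+1)(n+2)(n+3)]$, which equals $1/6$ at $n=0$, vanishes at $n=1$, and is non-positive for $n\ge 2$. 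Splitting $g=g_+-g_-$ into its non-negative pieces on $[0,1/2]$ and $[1/2,1]$ and applying monotone convergence to each part permits termwise integration and produces
\begin{equation*}
J(r,\gamma)\;=\;r^{-\gamma}\left[\frac{1}{6}\;-\;\sum_{n\ge 2}\frac{(\gamma)_n\,(n-1)}{n!\,(n+1)(n+2)(n+3)\,r^n}\right].
\end{equation*}
The decisive bound is $(\gamma)_n=\gamma\cdot(\gamma+1)(\gamma+2)\cdots(\gamma+n-1)\le\gamma\cdot n!$, which holds for $0<\gamma<1$ because each factor $\gamma+k\le 1+k$. Together with $r^n\ge 1$ this bounds the subtracted sum by $\gamma\sum_{n\ge 2}\tfrac{n-1}{(n+1)(n+2)(n+3)}$, and the partial-fraction identity $\tfrac{n-1}{(n+1)(n+2)(n+3)}=-\tfrac{1}{n+1}+\tfrac{3}{n+2}-\tfrac{2}{n+3}$ telescopes to exactly $1/6$. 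Hence $J(r,\gamma)\ge (1-\gamma)/(6r^\gamma)$.

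Multiplying through by $h^{1-\gamma}$ delivers the first inequality; the second follows from the first by the reflection $y\mapsto a+b-y$, which converts $\varphi_N$ into $\varphi_0$ about the reversed partition and interchanges $x_{i/2}-a$ with $b-x_{i/2}$. The main technical point I expect to watch is the termwise-integration step at the borderline value $r=1$ (i.e., $i=2$), where $(1-t/r)^{-\gamma}$ diverges at $t=1$; the sign-splitting device above sidesteps this cleanly because each resulting positive-term series converges absolutely (the integrals $\int_{1/2}^1 g_-(t)t^n\,dt$ decay like $1/n^2$, making the coefficient series summable against $(\gamma)_n/n!\sim n^{\gamma-1}$), so no separate direct computation for $i=2$ is needed.
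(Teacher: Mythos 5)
Your proof is correct and takes essentially the same route as the paper: the case $i=1$ is handled by the same direct computation (reducing to $(3-\gamma)(1-\gamma)\le 3$), the case $i\ge 2$ by a binomial expansion in powers of $1/r$ whose leading term is $\tfrac16$ and whose tail is bounded by $\tfrac{\gamma}{6}$ through the same telescoping sum $\sum_{n\ge 2}\frac{n-1}{(n+1)(n+2)(n+3)}=\tfrac16$, and the bound for $\varphi_N$ follows by the same reflection/symmetry. The only difference is technical: the paper expands the closed-form value of the integral (the boundary-coefficient formula), so it never needs the termwise-integration justification at $r=1$ that you correctly supply via sign-splitting and monotone convergence.
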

\begin{proof}
If $i=1$, then
\begin{equation*}
\begin{split}
\int^{b}_{a} \frac{\varphi_{0}(x) }{|x_{\frac{i}{2}}-y|^{\gamma}} dy
&= \int^{x_{\frac{1}{2}}}_{x_{0}} \frac{\frac{x_{1}-y}{h}\frac{x_{1}-2y}{h} }{(x_{\frac{1}{2}}-y)^{\gamma}} dy
+ \int^{x_{1}}_{x_{\frac{1}{2}}} \frac{\frac{x_{1}-y}{h}\frac{x_{1}-2y}{h} }{(y-x_{\frac{1}{2}})^{\gamma}} dy= h^{1-\gamma} \frac{4}{3-\gamma} \frac{1}{2^{3-\gamma}}\\
&=\frac{1}{2\left(3-\gamma\right)}h\left(x_{\frac{i}{2}} -a\right)^{-\gamma}\geq \frac{1}{6} (1-\gamma)h\left(x_{\frac{i}{2}} -a\right)^{-\gamma}.
\end{split}
\end{equation*}
Let $\eta_{h,\gamma}$ be given in  \eqref{n2.4} and $i\geq 2$.  Using Taylor  expansion \eqref{n4.2}, it yields
\begin{equation*}
\begin{split}
& \int^{b}_{a} \frac{\varphi_{0}(x) }{|x_{\frac{i}{2}}-y|^{\gamma}} dy
= \eta_{h,\gamma}\left[ 4\left( \left(\frac{i}{2}\right)^{3-\gamma} - \left(\frac{i}{2}-1\right)^{3-\gamma}  \right) \right. \\
& \qquad\left. -(3-\gamma) \left( 3\left(\frac{i}{2}\right)^{2-\gamma} + \left(\frac{i}{2}-1\right)^{2-\gamma}  \right) + (3-\gamma)(2-\gamma)\left(\frac{i}{2}\right)^{1-\gamma}\right]\\
&= \eta_{h,\gamma}(3-\gamma)(2-\gamma)(1-\gamma)\left(\frac{i}{2}\right)^{-\gamma}\!\left[ \frac{1}{6}-\gamma\sum^{\infty}_{n=5}\frac{\prod^{n-3}_{k=2}(k-1+\gamma)(n-4)}{n!}\left(\frac{i}{2}\right)^{3-n}\right]\\
&\ge \frac{1}{6}\eta_{h,\gamma} (3-\gamma)(2-\gamma)(1-\gamma)^2\left(\frac{i}{2}\right)^{-\gamma}=\frac{1}{6} (1-\gamma)h\left(x_{\frac{i}{2}} -a\right)^{-\gamma}.
\end{split}
\end{equation*}
Here, for the last inequality, we use
\begin{equation*}
\begin{split}
 &\sum^{\infty}_{n=5}\frac{\prod^{n-3}_{k=2}(k-1+\gamma)(n-4)}{n!}\left(\frac{i}{2}\right)^{3-n}
  < \sum^{\infty}_{n=5}\frac{(n-3)!(n-4)}{n!}\\
 &\quad=\sum^{\infty}_{n=5}\frac{1}{(n-1)(n-2)}-2\sum^{\infty}_{n=5}\left(\frac{1}{n} -\frac{2}{n-1} +\frac{1}{n-2}  \right)
= \frac{1}{6}.
\end{split}
\end{equation*}
On the other hand, there exists
\begin{equation*}
  \int^{b}_{a} \frac{\varphi_{N}(x) }{\left| x_{\frac{i}{2}} - y \right|^{\gamma}} dy=\int^{b}_{a} \frac{\varphi_{0}(x) }{\left| x_{N-\frac{i}{2}} - y \right|^{\gamma}} dy
  \ge \frac{1}{6} (1-\gamma)h\left(b-x_{\frac{i}{2}} \right)^{-\gamma}.
\end{equation*}
The proof is completed.
\end{proof}
\begin{lemma}\label{nlemma4.5}
Let
\begin{equation*}
\begin{split}
\mathcal{A}=\left [ \begin{matrix}
 \mathcal{D}_{1}  & 0\\
 0      &\mathcal{D}_{2}
 \end{matrix}
 \right ]
-
\left [ \begin{matrix}
  \mathcal{M}     & \mathcal{Q }  \\
 \mathcal{ P }        & \mathcal{N}
 \end{matrix}
 \right ]
\end{split}.
\end{equation*}
Then $\mathcal{A}$ is  strictly  diagonally dominant by rows.
\end{lemma}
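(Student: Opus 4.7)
The plan is to reduce strict diagonal dominance to the positivity of the boundary coefficients $\beta_i,\gamma_i$, which has essentially already been established in Lemma~\ref{nlemma4.4}. The main tool is the partition-of-unity identity for the piecewise quadratic basis,
\begin{equation*}
1 \equiv \sum_{j=0}^{N}\varphi_{j}(y)+\sum_{j=0}^{N-1}\varphi_{j+\frac{1}{2}}(y),\qquad y\in[a,b],
\end{equation*}
which, when integrated against $|x_{\frac{i}{2}}-y|^{-\gamma}$, will express the diagonal entry $d_{\frac{i}{2}}$ as the sum of all the row entries of the blocks $\mathcal{M},\mathcal{Q}$ (or $\mathcal{P},\mathcal{N}$) together with the two boundary coefficients.

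First, I would split $\mathcal{A}$ into its two block rows. For an integer-indexed row with $1\le i\le N-1$, Lemma~\ref{nlemma4.3} ensures that every off-diagonal entry $-m_{|i-j|}$ and every coupling entry $-q_{|i-j-\frac{1}{2}|-\frac{1}{2}}$ is strictly negative, so the sum of absolute off-diagonal entries in that row equals
\begin{equation*}
\sum_{\substack{j=1\\ j\neq i}}^{N-1} m_{|i-j|}+\sum_{j=0}^{N-1} q_{|i-j-\frac{1}{2}|-\frac{1}{2}}.
\end{equation*}
Integrating the partition of unity against $|x_i-y|^{-\gamma}$ and comparing with the definitions in \eqref{n2.4}, I would obtain the closed-form identity
\begin{equation*}
d_i \;=\; m_0+\sum_{\substack{j=1\\ j\neq i}}^{N-1} m_{|i-j|}+\sum_{j=0}^{N-1} q_{|i-j-\frac{1}{2}|-\frac{1}{2}}+\beta_i+\beta_{N-i},
\end{equation*}
so that the diagonal-dominance gap for the row is exactly $\beta_i+\beta_{N-i}$. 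Identifying $\eta_{h,\gamma}\beta_i=\int_a^b \varphi_0(y)|x_i-y|^{-\gamma}dy$ and $\eta_{h,\gamma}\beta_{N-i}=\int_a^b \varphi_N(y)|x_i-y|^{-\gamma}dy$, Lemma~\ref{nlemma4.4} forces both of these to be strictly positive, giving strict diagonal dominance of the upper block row.

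The second block row is treated in the same way, using the half-integer collocation points $x_{i+\frac{1}{2}}$ with $0\le i\le N-1$. Positivity of the off-diagonal entries from Lemma~\ref{nlemma4.3} (here for $\mathcal{P}$ and $\mathcal{N}$) together with the same partition-of-unity integration yields
\begin{equation*}
d_{i+\frac{1}{2}} \;=\; n_0+\sum_{\substack{j=0\\ j\neq i}}^{N-1} n_{|i-j|}+\sum_{j=1}^{N-1} p_{|i+\frac{1}{2}-j|-\frac{1}{2}}+\gamma_i+\gamma_{N-i-1},
\end{equation*}
so the gap here equals $\gamma_i+\gamma_{N-i-1}$, again strictly positive by the second estimate of Lemma~\ref{nlemma4.4} applied at the node $x_{i+\frac{1}{2}}$.

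The main obstacle is purely bookkeeping: one must carefully check the two algebraic identities for $d_i$ and $d_{i+\frac{1}{2}}$ by matching the integrals $\int_a^b\varphi_j(y)|x_{\frac{i}{2}}-y|^{-\gamma}dy$ against the explicit coefficients $m_k,n_k,p_k,q_k,\beta_k,\gamma_k$ recorded in Section~\ref{subsection2.1}, including the special cases at $j=0,N$ and at $j=i,i+\frac{1}{2}$ where two sub-intervals contribute. Once these identities are in hand, strict diagonal dominance is immediate from Lemmas~\ref{nlemma4.3} and~\ref{nlemma4.4}.
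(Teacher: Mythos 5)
Your proposal is correct and follows essentially the same route as the paper: the paper also combines the sign information from Lemma~\ref{nlemma4.3} with the quadratic partition of unity, applying the scheme to $u\equiv 1$ so that each row sum of $\mathcal{A}$ equals $\beta_i+\beta_{N-i}$ (resp. $\gamma_i+\gamma_{N-i-1}$), i.e. $S_{\frac{i}{2}}/\eta_{h,\gamma}$, whose strict positivity is supplied by Lemma~\ref{nlemma4.4}. Writing the row-sum identity componentwise, as you do, versus in the matrix form $\eta_{h,\gamma}\mathcal{A}U=S$ with $U=(1,\dots,1)^{T}$, is only a cosmetic difference.
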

\begin{proof}
From  Lemma  \ref{nlemma4.4}, we know that $ \mathcal{M}$, $ \mathcal{N}$,  $\mathcal{P}$,  $\mathcal{Q}$ are positive matrices.
From  Lemma \ref{nlemma4.4} and the property of the interpolation operator, i.e.,  
$$1\equiv\sum_{j=0}^{N}\varphi_j(x)+\sum_{j=0}^{N-1}\varphi_{j+\frac{1}{2}}(x),$$
it yields
\begin{equation*}
\begin{split}
S_\frac{i}{2}:=&\int^{b}_{a} \frac{ 1}{\left| x_{\frac{i}{2}} - y \right|^{\gamma}} dy -
 \int^{b}_{a} \frac{ \sum^{N-1}_{j=1}  \varphi_{j}(y)+\sum^{N-1}_{j=0}  \varphi_{j+\frac{1}{2}}(y)}{\left| x_{\frac{i}{2}} - y \right|^{\gamma}} dy\\
= & \int^{b}_{a} \frac{\varphi_{0}(x) }{\left| x_{\frac{i}{2}} - y \right|^{\gamma}} dy +
    \int^{b}_{a} \frac{\varphi_{N}(x) }{\left| x_{\frac{i}{2}} - y \right|^{\gamma}} dy>0,~~i=1,2,\cdots, 2N-1.
\end{split}
\end{equation*}
Using  \eqref{n2.3}, \eqref{n2.9} with $u(x)\equiv 1$,
we can rewrite the above equation as the following  
\begin{equation}\label{n4.3}
\begin{split}
\eta_{h,\gamma}\left\{\left [ \begin{matrix}
 \mathcal{D}_{1}  & 0\\
 0      &\mathcal{D}_{2}
 \end{matrix}
 \right ]
-
\left [ \begin{matrix}
  \mathcal{M}     & \mathcal{Q }  \\
 \mathcal{ P }        & \mathcal{N}
 \end{matrix}
 \right ]\right\}U=\eta_{h,\gamma}K
 =S,
\end{split}
\end{equation}
where $U=\left(1,1,\cdots,1\right)^{T}$ and
$
 S=\left(S_{1},S_{2},\cdots,S_{N-1},S_{\frac{1}{2}},S_{\frac{3}{2}},\cdots,S_{N-\frac{1}{2}}\right)^{T}.
$
The proof is completed.
\end{proof}
\begin{remark}\label{nremark4.2}
From Lemma \ref{nlemma4.5}, we know that the matrix $\mathcal{A}$ is nonsingular \cite[p.\,23]{Varga:00} and the linear system \eqref{n2.11} has a unique solution.
\end{remark}

\begin{theorem}\label{ntheorem4.6}
Let $u_{i}$ be the approximate solution of $u(x_{i})$ computed by the discretization scheme \eqref{n2.10}.  Then
$$ ||u(x_{i})-u_{i}||_{\infty}= \mathcal{O}(h^{3}).$$
\end{theorem}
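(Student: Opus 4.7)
The plan is to mimic the structure of the proof of Theorem \ref{ntheorem4.2}, but exploiting the sharper local truncation bound from Theorem \ref{nlemma3.7} together with the sharper row-sum estimate from Lemma \ref{nlemma4.4}. Subtracting the discrete scheme \eqref{n2.10} from the continuous equation \eqref{n2.9}, the error vector $\varepsilon$ (with entries indexed by $i=1,\ldots,N-1$ and $i+\tfrac12$ for $i=0,\ldots,N-1$, and $\varepsilon_0=\varepsilon_N=0$) satisfies
\begin{equation*}
  \eta_{h,\gamma}\,\mathcal{A}\,\varepsilon \;=\; R,
\end{equation*}
where $R$ is the truncation-error vector whose entries are bounded, by Theorem \ref{nlemma3.7}, by $|R_{k}|\le C\bigl(h^{4}\eta_{k}^{-\gamma}+h^{5-\gamma}\bigr)$, $\eta_{k}=\min\{x_{k}-a,b-x_{k}\}$.

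Next I would carry out a discrete maximum-principle style estimate at the index $k_{0}$ achieving $|\varepsilon_{k_{0}}|=\|\varepsilon\|_{\infty}$. The key observation is that, by Lemma \ref{nlemma4.3}, the four blocks $\mathcal{M},\mathcal{N},\mathcal{P},\mathcal{Q}$ are entrywise positive, so every off-diagonal entry of $\mathcal{A}$ is strictly negative while each diagonal entry is positive (since $\mathcal{A}$ is strictly row-diagonally dominant by Lemma \ref{nlemma4.5}). Hence for the row $k_{0}$,
\begin{equation*}
  |R_{k_{0}}| \;\ge\; \eta_{h,\gamma}\Bigl[\mathcal{A}_{k_{0},k_{0}} - \!\!\!\sum_{j\neq k_{0}}|\mathcal{A}_{k_{0},j}|\Bigr]|\varepsilon_{k_{0}}|
  \;=\; S_{k_{0}}\,|\varepsilon_{k_{0}}|,
\end{equation*}
where $S_{k_{0}}$ is precisely the $k_{0}$-th row sum of $\eta_{h,\gamma}\mathcal{A}$ identified through equation \eqref{n4.3} with the constant test function $u\equiv 1$. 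That equation shows $S_{k_{0}} = \int_{a}^{b}\frac{\varphi_{0}(y)+\varphi_{N}(y)}{|x_{k_{0}}-y|^{\gamma}}\,dy$, so Lemma \ref{nlemma4.4} yields the sharp lower bound
\begin{equation*}
  S_{k_{0}} \;\ge\; \tfrac{1}{6}(1-\gamma)\,h\bigl(\eta_{k_{0}}\bigr)^{-\gamma}.
\end{equation*}

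Combining these two estimates gives
\begin{equation*}
  \|\varepsilon\|_{\infty} \;=\; |\varepsilon_{k_{0}}| \;\le\; \frac{|R_{k_{0}}|}{S_{k_{0}}}
  \;\le\; \frac{C\bigl(h^{4}\eta_{k_{0}}^{-\gamma}+h^{5-\gamma}\bigr)}{\tfrac{1}{6}(1-\gamma)\,h\,\eta_{k_{0}}^{-\gamma}}
  \;\le\; C'\bigl(h^{3}+h^{4-\gamma}\eta_{k_{0}}^{\gamma}\bigr)=\mathcal{O}(h^{3}),
\end{equation*}
since $\eta_{k_{0}}\le (b-a)/2$ and $0<\gamma<1$.

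The main obstacle is really the matching of singular weights: the $\eta_{k_{0}}^{-\gamma}$ factor appearing in the local truncation error from Theorem \ref{nlemma3.7} must cancel exactly against the same factor appearing in the lower bound $S_{k_{0}}\gtrsim h\,\eta_{k_{0}}^{-\gamma}$ from Lemma \ref{nlemma4.4}. Without the sharp boundary-weighted bound on $S_{k_{0}}$, a uniform lower bound of order $h$ would lose a factor $h^{-\gamma}$ at points near the endpoints and degrade the rate; conversely, without the sharp $\eta_{k_{0}}^{-\gamma}$ form of the truncation error, one would not obtain a clean $h^{3}$. The fact that both estimates carry the identical $\eta_{k_{0}}^{-\gamma}$ weight is what makes the optimal global rate $\mathcal{O}(h^{3})$ fall out.
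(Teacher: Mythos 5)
Your proposal is correct and follows essentially the same route as the paper's proof: subtract the scheme from the collocation equation, apply the diagonal-dominance/maximum-principle argument at the maximizing index using the sign structure from Lemmas \ref{nlemma4.3} and \ref{nlemma4.5}, identify the row sum $S_{k_0}$ via the constant-function identity \eqref{n4.3} as the boundary basis-function integrals, and cancel the $\eta_{k_0}^{-\gamma}$ weight in Theorem \ref{nlemma3.7} against the lower bound of Lemma \ref{nlemma4.4}. The only cosmetic difference is that the paper first permutes the unknowns into spatial order before running the argument, which does not affect the substance.
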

\begin{proof}
 Let $\epsilon_{i}=u(x_{i})\!-\!u_{i}$ with $\epsilon_{0}=\epsilon_{N}=0$. Subtracting (\ref{n2.10}) from (\ref{n2.9}), we get
\begin{equation*}
  \eta_{h,\gamma} \cdot \mathcal{A}\epsilon=R
\end{equation*}
with $\epsilon=\left(\epsilon_{1},\epsilon_{2},\cdots,\epsilon_{N-1},\epsilon_{\frac{1}{2}},\epsilon_{\frac{3}{2}},\cdots,\epsilon_{N-\frac{1}{2}}\right)^{T}$
and similarly for  $R$.

Upon relabeling and reorienting the  vectors $\epsilon$ and $R$ as
\begin{equation*}
\begin{split}
\widetilde{\epsilon}&=\left(\epsilon_{\frac{1}{2}},\epsilon_{1},\epsilon_{\frac{3}{2}},\epsilon_{2},\cdots,\epsilon_{N-1},\epsilon_{N-\frac{1}{2}}\right)^{T},\\
\widetilde{R}&=\left(R_{\frac{1}{2}},R_{1},R_{\frac{3}{2}},R_{2},\cdots,R_{N-1},R_{N-\frac{1}{2}}\right)^{T},
\end{split}
\end{equation*}
then the above equation can be recast as
\begin{equation*}
  \eta_{h,\gamma} \cdot \mathcal{\widetilde{A}}\widetilde{\epsilon}=\widetilde{R}.
\end{equation*}

Let $\left|\epsilon_{\frac{i_{0}}{2}}\right|:=||\epsilon||_{\infty}=\max_{1\leq i\leq 2N-1}|\epsilon_\frac{i}{2}|$ and $\mathcal{\widetilde{A}}=\left\{ a_{i,j} \right\}_{i,j=1}^{2N-1}$.
From Lemma \ref{nlemma4.4} and \eqref{n4.3}, it yields $a_{i,i}>0$ and $a_{i,j}<0$, $i\neq j$ and
\begin{equation*}
\begin{split}
\left|R_{\frac{i_{0}}{2}}\right|
=&      \eta_{h,\gamma} \left|a_{i_{0},i_{0}}\epsilon_{\frac{i_{0}}{2}}+\sum^{2N-1}_{j=1,j\neq i_{0}}a_{i_{0},j}\epsilon_{\frac{j}{2}}\right|
\ge \eta_{h,\gamma}\left[ a_{i_{0},i_{0}}\left|\epsilon_{\frac{i_{0}}{2}}\right|-\sum^{2N-1}_{j=1,j\neq i_{0}}\left|a_{i_{0},j}\right|
         \left|\epsilon_{\frac{j}{2}}\right| \right]\\
\ge&  \eta_{h,\gamma}\left[ a_{i_{0},i_{0}}\left|\epsilon_{\frac{i_{0}}{2}}\right|-\!\!\!\sum^{2N-1}_{j=1,j\neq i_{0}}\left|a_{i_{0},j}\right|
         \left|\epsilon_{\frac{i_{0}}{2}}\right|\right]
=   \eta_{h,\gamma} \left[a_{i_{0},i_{0}}-\!\!\!\sum^{2N-1}_{j=1,j\neq i_{0}}\left|a_{i_{0},j}\right|\right]\left|\epsilon_{\frac{i_{0}}{2}}\right|\\
= & \left[\int^{b}_{a} \frac{\varphi_{0}(x) }{\left| x_{\frac{i_0}{2}} - y \right|^{\gamma}} dy +
    \int^{b}_{a} \frac{\varphi_{N}(x) }{\left| x_{\frac{i_0}{2}} - y \right|^{\gamma}} dy\right]\left|\epsilon_{\frac{i_{0}}{2}}\right|
    =S_{\frac{i_0}{2}}\left|\epsilon_{\frac{i_{0}}{2}}\right|.
\end{split}
\end{equation*}
According to  Lemma \ref{nlemma4.4} and Theorem \ref{nlemma3.7}, we have
$$S_{\frac{i_0}{2}}\geq \frac{1}{6} (1-\gamma)h\left[\left(x_{\frac{i_0}{2}} -a\right)^{-\gamma}+\left(b-x_{\frac{i_0}{2}} \right)^{-\gamma} \right],$$
and
$$R_{\frac{i}{2}}=\mathcal{O}\left(h^4\left(\eta_\frac{i}{2}\right)^{-\gamma}\right)=\max\left\{\left(x_{\frac{i}{2}}-a\right)^{-\gamma},\left(b-x_{\frac{i}{2}}\right)^{-\gamma}\right\}\mathcal{O}\left(h^4\right).$$
Then
\begin{equation*}
\begin{split}
 ||\epsilon||_{\infty}
 =\left|\epsilon_{\frac{i_{0}}{2}}\right| \le \frac{\left|R_{\frac{i_{0}}{2}}\right|}{S_{\frac{i_0}{2}}}= \mathcal{O}\left(h^3\right).
\end{split}
\end{equation*}
The proof is completed.
\end{proof}

\section{Numerical results}
In this section, we numerical verify the above theoretical results including convergence rates. In particularly, some simulations for two-dimensional nonlocal problems with  nonsmooth kernels in  nonconvex   polygonal  domain are performed.
\subsection{Numerical example for 1D}
In this subsection,   the $l_\infty$ norm is used to measure the numerical errors.
\begin{example}\label{ex:01}
To numerically confirm the result of Lemma \ref{nlemma3.1} and  Theorem \ref{nlemma3.7}, we consider the integral \eqref{n1.2}  with $a=0,b=1$.
Here the test function is $u(x)=e^x$ and define $f(x)$ accordingly.
\end{example}

\begin{table}[!th]
\renewcommand{\captionfont}{\footnotesize}
  \centering
  \small
  \begin{tabular}{cccccccccc}
  \hline\noalign{\smallskip}
 \multicolumn{1}{c}{$\gamma$} &\multicolumn{1}{c}{h}&\multicolumn{2}{c}{$x=h$}
      &\multicolumn{2}{c}{$x=1/3$}
      &\multicolumn{2}{c}{$x=1/2$}\\
\cline{3-4}\cline{5-6}\cline{7-8}\cline{9-10}\noalign{\smallskip}
&  &error  &order &error  &order &error  &order \\
  \hline
 0.3  &1/64 &4.7106e-05&~~~~   &5.9669e-05&~~~~   &6.0480e-05 &~~~~    \\
 0.3  &1/128 &1.1667e-05&2.0135 &1.4706e-05&2.0206 &1.5163e-05 &1.9959  \\
 0.3  &1/256 &2.8996e-06&2.0085 &3.6442e-06&2.0127 &3.7975e-06 &1.9974  \\
 0.3  &1/512&7.2218e-07&2.0054 &9.0607e-07&2.0079 &9.5039e-07 &1.9985  \\
  \hline
 0.7  &1/64 &9.3738e-05&~~~~   &3.2028e-04&~~~~   &1.5912e-04 &~~~~    \\
 0.7  &1/128 &2.3178e-05&2.0159 &7.2583e-05&2.1416 &4.0977e-05 &1.9572  \\
 0.7  &1/256 &5.7477e-06&2.0117 &1.6660e-05&2.1232 &1.0487e-05 &1.9662  \\
 0.7  &1/512&1.4280e-06&2.0090 &3.8597e-06&2.1098 &2.6712e-06 &1.9730  \\
  \hline
  \end{tabular}
 \caption{Example \ref{ex:01}: The errors of numerical scheme (\ref{n2.1}) with PLC.}
 \label{TT01}
\end{table}

\begin{table}[!th]
\renewcommand{\captionfont}{\footnotesize}
  \centering
  \small
  \begin{tabular}{cccccccccc}
  \hline\noalign{\smallskip}
 \multicolumn{1}{c}{$\gamma$} &\multicolumn{1}{c}{h}
      &\multicolumn{2}{c}{$x=h$}
      &\multicolumn{2}{c}{$x=1/3$}
      &\multicolumn{2}{c}{$x=1/2$}\\
\cline{3-4}\cline{5-6}\cline{7-8}\cline{9-10}\noalign{\smallskip}
&  &error  &order &error  &order &error  &order \\
  \hline
 0.3  &1/64  & 2.0549e-10&~~~~    &2.4848e-09&~~~~   &1.2613e-11&~~~~   \\
 0.3  &1/128 &1.6878e-11&3.6059~~~~&1.8583e-10&3.7410 &7.4474e-13&4.0820  \\
 0.3  &1/256 &1.3696e-12&3.6233~~~~&1.4627e-11&3.6672 &4.6185e-14&4.0112  \\
 0.3  &1/512 &1.1147e-13&3.6190~~~~&1.1098e-12&3.7202 &2.6645e-15&4.1154  \\
   \hline
 0.7  &1/64  &1.5922e-09&         ~~&1.6352e-07&~~~~   &3.3388e-10&~~~~    \\
 0.7  &1/128 &1.5680e-10&3.3440~~~~&1.6506e-08&3.3084 &2.0851e-11&4.0011  \\
 0.7  &1/256 &1.5652e-11&3.3245~~~~&1.6815e-09&3.2951 &1.3038e-12&3.9993  \\
 0.7  &1/512 &1.5730e-12&3.3148~~~~&1.7039e-10&3.3028 &8.3489e-14&3.9649  \\
  \hline
  \end{tabular}
 \caption{Example \ref{ex:01}: The errors of numerical scheme (\ref{n2.3}) with PQC.}
 \label{TT02}
\end{table}

Table \ref{TT01} shows that the convergence with the local truncation error $\mathcal{O}\left(h^2\right)$ for scheme \eqref{n2.1} by PLC, which   is in agreement  Lemma  \ref{nlemma3.1}.
Table \ref{TT02} shows that the convergence with the local truncation error $\mathcal{O}\left(h^4\left(\eta_i\right)^{-\gamma}\right)$, $\eta_i=\min\left\{x_{\frac{i}{2}}-a,b-x_{\frac{i}{2}}\right\}$  of  scheme \eqref{n2.3} by PQC.  It should be noted that if $x$  is  not  an element junction point (e.g., $x=\frac{1}{3}$), the errors reduce $\mathcal{O}(h^{4-\gamma})$, see Remark \ref{nremark3.1}.

\begin{example}\label{ex:02}
 Consider the  nonlocal problems \eqref{n1.1}
with a finite domain $a=0$, $b=1$. The exact solution is  $u(x)=e^x$ and the nonhomogeneous boundaries $u(0)=1$, $u(1)=e$.
Then define $f(x)$ accordingly.
\end{example}
\begin{table}[!th]
\renewcommand{\captionfont}{\footnotesize}
  \centering
  \small
  \begin{tabular}{cccccccccc}
  \hline\noalign{\smallskip}
 \multicolumn{1}{c}{h}&\multicolumn{2}{c}{$\gamma=0$} &\multicolumn{2}{c}{$\gamma=0.3$}
      &\multicolumn{2}{c}{$\gamma=0.7$}\\
\cline{2-3}\cline{4-5}\cline{6-7}\noalign{\smallskip}
  &error
  &order & error &order & error  &order\\
  \hline
  1/16  &8.9488e-03&~~~~   &1.0678e-02&~~~~   &1.4643e-02 &~~~~    \\
  1/32  &4.4746e-03&0.9999 &5.3149e-03&1.0065 &7.2511e-03 &1.0139 \\
  1/64  &2.2373e-03&1.0000 &2.6473e-03&1.0055 &3.5832e-03 &1.0170  \\
  1/128 &1.1187e-03&0.9999 &1.3201e-03&1.0039 &1.7732e-03 &1.0149  \\
  \hline
  \end{tabular}
 \caption{Example \ref{ex:02}: The errors of numerical scheme (\ref{n2.8}) with PLC.}
 \label{TT03}
\end{table}
%
\begin{table}[!th]
\renewcommand{\captionfont}{\footnotesize}
  \centering
  \small
  \begin{tabular}{cccccccccc}
  \hline\noalign{\smallskip}
 \multicolumn{1}{c}{h}&\multicolumn{2}{c}{$\gamma=0$} &\multicolumn{2}{c}{$\gamma=0.3$}
      &\multicolumn{2}{c}{$\gamma=0.7$}\\
\cline{2-3}\cline{4-5}\cline{6-7}\noalign{\smallskip}
  &error
  &order & error &order & error  &order\\
  \hline
  1/16  &4.3693e-07&~~~~   &2.5395e-07&~~~~   &4.6304e-07 &~~~~ \\
  1/32  &5.4621e-08&2.9999 &2.9901e-08&3.0863 &5.3886e-08 &3.1032 \\
  1/64  &6.8279e-09&2.9999 &3.5744e-09&3.0644 &6.2303e-09 &3.1125 \\
  1/128 &8.5191e-10&3.0027 &4.3270e-10&3.0463 &7.2423e-10 &3.1048 \\
  \hline
  \end{tabular}
 \caption{Example \ref{ex:02}: The errors of numerical scheme (\ref{n2.11}) with PQC.}
 \label{TT04}
\end{table}

Tables \ref{TT03} and  \ref{TT04} show that the linear and quadric  polynomial collocation method (\ref{n2.8}) and (\ref{n2.11}), respectively,
have first-order and third-order convergent, which are  in agreement  Theorems  \ref{ntheorem4.2}  and  \ref{ntheorem4.6}.

\subsection{Numerical example for 2D}
In this subsection,    the $l_\infty$ norm and the discrete $L^2$-norm, respectively, are used to measure the numerical errors.
\begin{example}\label{ex:06}
Let us  consider the following two-dimensional  nonlocal problems
\begin{equation*}
\int_{\Omega} \frac{u(x,y)-u(\bar{x},\bar{y})}{\left|\sqrt{(x-\bar{x})^2+(y-\bar{y}^2})\right|^{\gamma}} d\bar{x}d\bar{y}=f(x,y),
\end{equation*}
where the nonconvex polygonal domain is a  five-point star domain $\Omega$ in $(0, 2)\times(0, 2),$   and the exact solution is  $u(x, y)=e^{x^2}\cos(\pi y)$.
Then the nonhomogeneous boundaries condition and source function $f(x,y)$ are defined  accordingly.

In Fig. \ref{fig03},  the triangulations when $h=1/4$ and $h=1/8$ are depicted.

\end{example}
\begin{figure}[!th]
\renewcommand{\captionfont}{\footnotesize}
\centering
\subfigure[~]{
\begin{minipage}{6cm}
\centering
\includegraphics[width = 4.0cm,height =3.5cm]{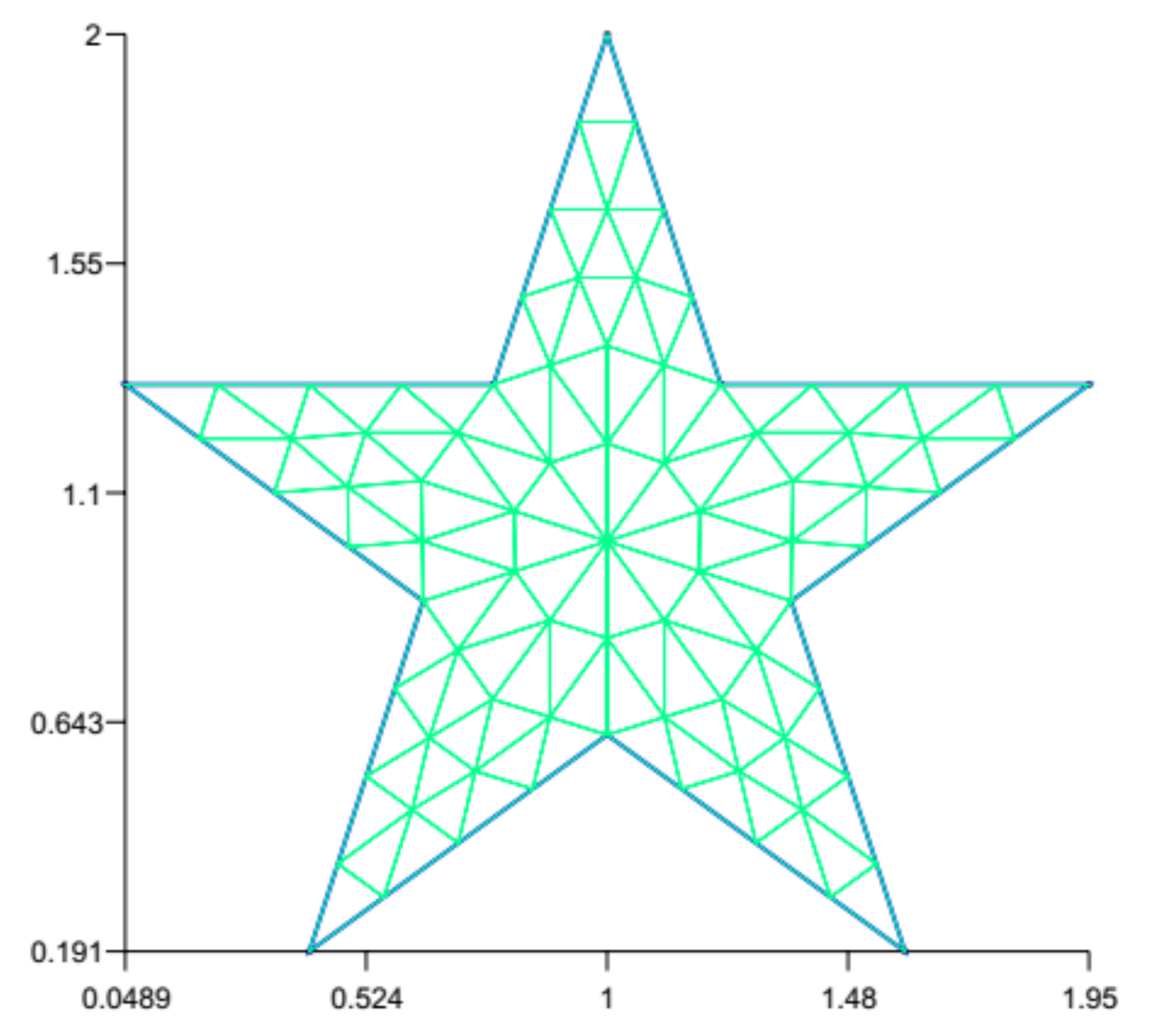}
\end{minipage}
}
\subfigure[~]{
\begin{minipage}{6cm}
\centering
\includegraphics[width = 4.0cm,height =3.5cm]{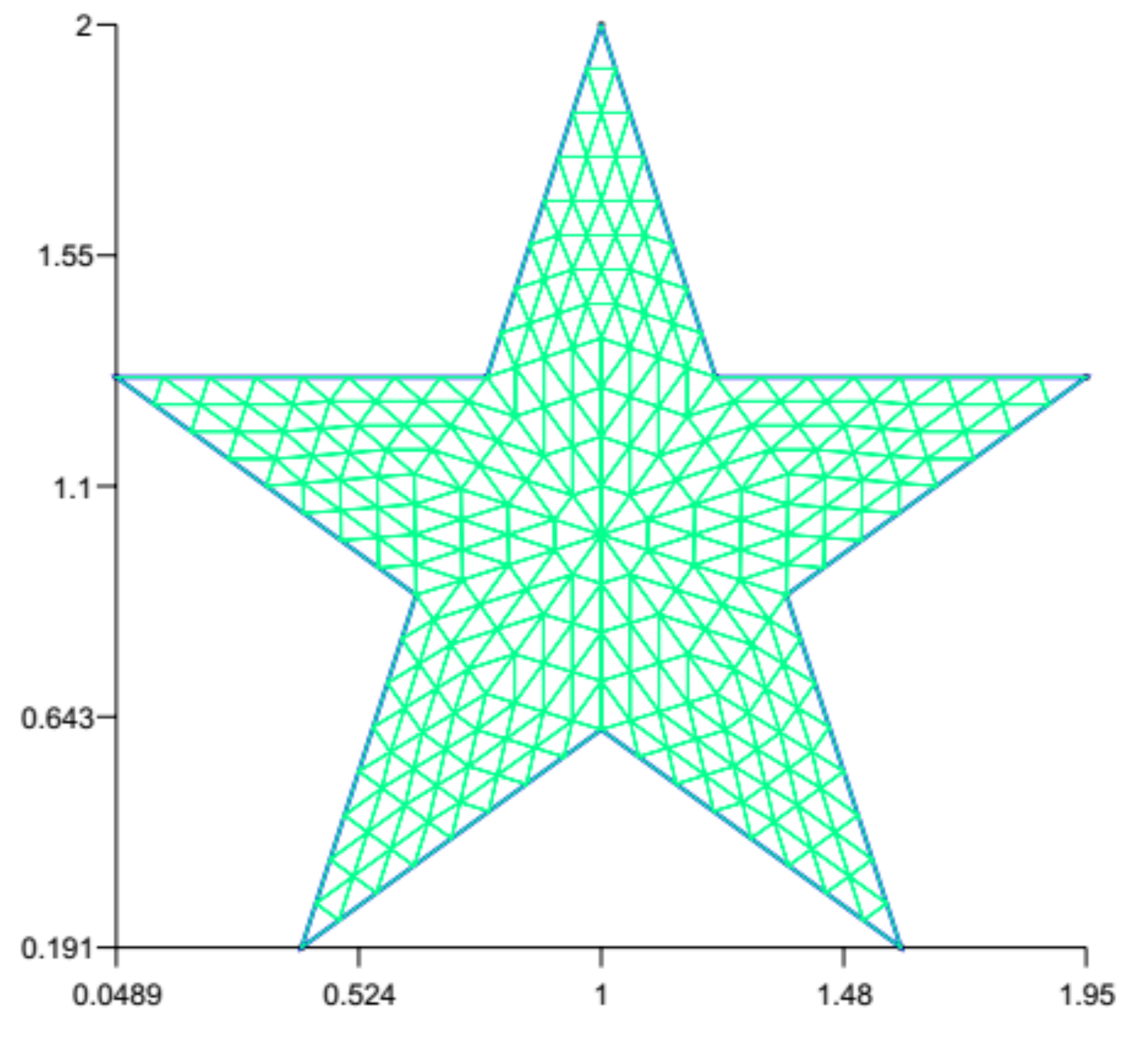}
\end{minipage}
}
\caption{The space meshes of Example \ref{ex:06}: (a) h=1/4, (b) h=1/8 }
\label{fig03}
\end{figure}

\begin{table}[!th]
\renewcommand{\captionfont}{\footnotesize}
  \centering
  \small
  \begin{tabular}{cccccccccc}
  \hline\noalign{\smallskip}
 \multicolumn{1}{c}{h}&\multicolumn{4}{c}{$\gamma=0.3$}
      &\multicolumn{4}{c}{$\gamma=0.7$}\\
\cline{2-5}\cline{6-9}\noalign{\smallskip}
  &$||\cdot||_{L^2}$  &order &$||\cdot||_{l_{\infty}}$  &order &$||\cdot||_{L^2}$ &order &$||\cdot||_{l_{\infty}}$  &order\\
  \hline
  1/4  &8.145e-04&~~~~   &7.920e-03&~~~~   &2.334e-03 &~~~~   &2.429e-02 &~~~~ \\
  1/8  &3.617e-04&1.17 &2.720e-03&1.54 &1.035e-03 &1.17 &8.728e-03 &1.47 \\
  1/16 &1.649e-04&1.13 &1.024e-03&1.40 &4.589e-04 &1.17 &3.158e-03 &1.46 \\
  1/32 &7.915e-05&1.05 &4.217e-04&1.27 &2.159e-04 &1.08 &1.244e-03 &1.34 \\
  \hline
  \end{tabular}
 \caption{Example \ref{ex:06}:  The errors of numerical simulations by PLC.}
 \label{TT13}
\end{table}

\begin{table}[!th]
\renewcommand{\captionfont}{\footnotesize}
  \centering
  \small
  \begin{tabular}{cccccccccc}
  \hline\noalign{\smallskip}
 \multicolumn{1}{c}{h}&\multicolumn{4}{c}{$\gamma=0.3$}
      &\multicolumn{4}{c}{$\gamma=0.7$}\\
\cline{2-5}\cline{6-9}\noalign{\smallskip}
  &$||\cdot||_{L^2}$  &order &$||\cdot||_{l_{\infty}}$  &order &$||\cdot||_{L^2}$ &order &$||\cdot||_{l_{\infty}}$  &order\\
  \hline
  1/4  &1.243e-05&~~~~   &9.466e-05&~~~~   &3.205e-05 &~~~~   &3.441e-04 &~~~~ \\
  1/8  &1.029e-06&3.59 &7.083e-06&3.74 &3.585e-06 &3.16 &3.184e-05 &3.43 \\
  1/16 &1.084e-07&3.24 &6.368e-07&3.47 &4.274e-07 &3.06 &3.031e-06 &3.39 \\
  1/32 &1.281e-08&3.08 &6.601e-08&3.27 &5.237e-08 &3.02 &3.084e-07 &3.29 \\
  \hline
  \end{tabular}
 \caption{Example \ref{ex:06}: The errors of numerical simulations by PQC.}
 \label{TT14}
\end{table}
Table \ref{TT13} and Table \ref{TT14} show that the orders of accuracy are
$O(h)$ and $O(h^3)$ by PLC and PQC, respectively, in a a nonconvex polygonal domain.
Here $||\cdot||_{l_{\infty}}$ denotes the $l_\infty$ norm and $||\cdot||_{L^2}$ denotes  the discrete $L^2$-norm.
\section{Conclusion}
In this work, we first derive an optimal error estimate  for  weakly singular integral \eqref{n1.2} by PQC  when the singular point coincides with an element junction point.
Then the sharp error estimate of piecewise linear and quadratic polynomial collocation  for nonlocal problems \eqref{n1.1}  are provided.
Hopefully,  an optimal error estimate of the $k$th-order Newton-Cotes rule   $O(h^k)$ for odd $k$ and $O(h^{k+1})$ for even $k$
can be obtained of nonlocal model \eqref{n1.1} by following the idea given in this paper.
Moreover, it is also provided a few technical analysis for two-dimensional nonlocal problems with singular kernels or  other  nonsmooth kernels.

\bibliographystyle{amsplain}

\end{document}